\documentclass[preprint,3p,times]{elsarticle}

\usepackage{amssymb}
\usepackage{amsmath}
\usepackage{amsthm}
\usepackage{mathrsfs}
\usepackage{yhmath}
\usepackage{color}
\usepackage{enumitem}
\usepackage{verbatim}
\usepackage{tikz}
\usepackage{appendix}
\usepackage[cal=cm]{mathalfa}
\usepackage{xspace} 
\usepackage{cases}
\usepackage{graphicx}
\usepackage{tikz}
\usepackage{float}
\usepackage[colorlinks=true]{hyperref}

\newtheorem{theorem}{Theorem}[section]
\newtheorem{lemma}[theorem]{Lemma}
\newtheorem{prop}[theorem]{Proposition}

\newtheorem{mainthm}{Theorem}

\newtheorem{maincor}[mainthm]{Corollary}

\theoremstyle{definition}

\theoremstyle{remark}

\newcommand{\abs}[1]{\lvert#1\rvert} %长度
\newcommand{\INF}{[\omega]^{\omega}}%omega的无穷子集全体
\newcommand{\func}[3]{#1 \colon #2 \to #3} %函数#1: #2 -> #3
\newcommand{\set}[2]{\{#1 \colon #2\}} %集合{#1: #2}
\newcommand{\cantor}{2^\omega} %The Cantor space
\newcommand{\fullBinaryTree}{2^{<\omega}} %The infinite full binary tree
\newcommand{\proj}{\operatorname{proj}} %投影
\newcommand{\pow}[1]{\operatorname{Pow}(#1)} %表示幂集
\newcommand{\symdiff}{\mathbin{\triangle}} %表示对称差
\renewcommand{\restriction}{\mathord{\upharpoonright}} %限制

\newcommand{\CI}{\mathcal{I}}
\newcommand{\CJ}{\mathcal{J}}

\newcommand{\CC}{\mathcal{C}}

\newcommand{\CE}{\mathcal{E}}

\newcommand{\CW}{\mathcal{W}}

\newcommand{\SD}{\mathscr{D}}

\newcommand{\code}[2]{\langle #1,#2 \rangle} %bijection from \omega \times \omega to \omega
\newcommand{\fin}{\mathsf{Fin}}

\makeatletter
\def\ps@pprintTitle{%
 \let\@oddhead\@empty
 \let\@evenhead\@empty
 \def\@oddfoot{\centerline{\thepage}}%
 \let\@evenfoot\@oddfoot}
\makeatother

\allowdisplaybreaks
\begin{document}

\begin{frontmatter}
%% Title, authors and addresses

%% use the tnoteref command within \title for footnotes;
%% use the tnotetext command for theassociated footnote;
%% use the fnref command within \author or \address for footnotes;
%% use the fntext command for theassociated footnote;
%% use the corref command within \author for corresponding author footnotes;
%% use the cortext command for theassociated footnote;
%% use the ead command for the email address,
%% and the form \ead[url] for the home page:
%% \title{Title\tnoteref{label1}}
%% \tnotetext[label1]{}
%% \author{Name\corref{cor1}\fnref{label2}}
%% \ead{email address}
%% \ead[url]{home page}
%% \fntext[label2]{}
%% \cortext[cor1]{}
%% \address{Address\fnref{label3}}
%% \fntext[label3]{}
\title{Closed maximal eventually different families for $\boldsymbol{\Sigma}^0_3$ and weakly Farah ideals}
%% use optional labels to link authors explicitly to addresses:
%% \author[label1,label2]{}
%% \address[label1]{}
%% \address[label2]{}

\author[1]{Jialiang He}
\ead{jialianghe@scu.edu.cn}

\author[2]{Yi Zhou}
\ead{zhouyi.math@outlook.com}

\author[1]{Jiaheng Zhuang\corref{cor}}
\ead{jiahengzhuang@outlook.com}

\cortext[cor]{Corresponding author.}
\address[1]{School of Mathematics, Sichuan University, Chengdu 610064, P.R.China}
\address[2]{School of Sciences, Southwest Petroleum University, Chengdu, 610500, P.R.China}
\begin{abstract}
Motivated by the existence of closed maximal $\mathcal{I}$-eventually different ($\mathcal{I}$-MED) families, we study uniform combinatorial properties of ideals.
For a fixed ideal, a property is uniform if its witnesses can be chosen by a single Borel map.
We prove that every $\boldsymbol{\Sigma}^0_3$ ideal is uniformly weakly $P^+$.
We also prove that the classes of Farah and weakly Farah ideals coincide, answering a question of Hru\v{s}\'ak and Meza-Alc\'antara.
We then show that every weakly Farah ideal is uniformly weakly Ramsey.
As an application, we obtain closed $\mathcal{I}$-MED families for $\boldsymbol{\Sigma}^0_3$ ideals and weakly Farah ideals.
\end{abstract}

\begin{keyword}
%% keywords here, in the form: keyword \sep keyword
ideals \sep uniformly weakly $P^+$ \sep maximal eventually different families for ideals \sep uniformly weakly Ramsey \sep weakly Farah ideals
%% PACS codes here, in the form: \PACS code \sep code

%% MSC codes here, in the form: \MSC code \sep code
\MSC[2020] 03E15 \sep 03E05
%% or \MSC[2008] code \sep code (2000 is the default)
%\MSC[2020]  03E15 \sep 03B52
\end{keyword}
\end{frontmatter}

\section{Introduction}

A family $\CE \subseteq \omega^\omega$ is \emph{eventually different} if any two distinct members agree at only finitely many coordinates, and \emph{maximal eventually different}, or \emph{MED}, if it is maximal with respect to inclusion among such families.
Like other maximal combinatorial objects, MED families can be constructed using the axiom of choice, and it is natural to ask how definable they can be.
Horowitz and Shelah \cite{HS2024} showed that there is a Borel MED family, and Schrittesser \cite{S2018} showed that there is even a closed one.

Throughout, an \emph{ideal} means an ideal on $\omega$.
He, Luo, Schrittesser and Zhang \cite{HLSZ} introduced the following relativization of MED families to an ideal $\CI$.
A family $\CE \subseteq \omega^\omega$ is an \emph{$\CI$-MED family} if $\set{n}{f(n) = g(n)} \in \CI$ for all distinct $f, g \in \CE$, and every $h \in \omega^\omega$ agrees with some $f \in \CE$ on an $\CI$-positive set. 
For $\CI = \fin$, these are exactly the MED families. They proved that every $F_\sigma$ ideal admits a closed $\CI$-MED family \cite[Corollary~6.3]{HLSZ}.
Our aim is to extend this existence result beyond
$F_\sigma$ ideals.
For Borel ideals, the uniformly weakly Ramsey property
suffices for this purpose
\cite[Theorems~4.8 and~6.2]{HLSZ}.
Our first goal is to apply this criterion to $\boldsymbol{\Sigma}^0_3$ ideals.

\subsection*{The weakly \texorpdfstring{$P^+$}{P+}-property for \texorpdfstring{$\boldsymbol{\Sigma}^0_3$}{Sigma03} ideals}
Balcerzak, \text{G\l{}\k{a}b}
and Leonetti proved that every $\boldsymbol{\Sigma}^0_3$ ideal is $P^+$ \cite[Corollary~2.7]{BGL2026}.
Their proof gives a positive pseudointersection for each decreasing sequence of positive sets, but does not provide a Borel map choosing one. 

For our application to closed $\CI$-MED families,
it suffices to obtain a Borel witness for the weakly
$P^+$ property, a weakening of $P^+$.
Indeed, every Borel uniformly weakly $P^+$ ideal is
uniformly weakly Ramsey \cite[Lemma~5.2]{HLSZ}.
We prove the following.

\begin{mainthm}\label{mthm:Sig03}
Every $\boldsymbol{\Sigma}^0_3$ ideal is uniformly weakly $P^+$.
\end{mainthm}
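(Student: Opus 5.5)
The plan is to build the witness by a single, explicitly Borel, diagonal construction. I recall the definition from \cite{HLSZ} in the form: $\CI$ is \emph{weakly $P^+$} if for every $\subseteq$-decreasing sequence $(X_n)$ of $\CI$-positive sets there is an $\CI$-positive $Y$ with $Y\setminus X_n$ finite for every $n$, or at least for infinitely many $n$. It is \emph{uniformly} weakly $P^+$ if some Borel map $(X_n)_n\mapsto Y$ produces such a $Y$ on the Borel set of all decreasing positive sequences. If the official definition is the ``infinitely many $n$'' variant, the construction below only gets easier.

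The witness will be
\[
Y=\bigcup_j \bigl(X_j\cap[a_j,a_{j+1})\bigr)
\]
for an increasing sequence $a_0<a_1<\cdots$ chosen Borel-measurably from $(X_n)$. Any such $Y$ satisfies $Y\setminus X_n\subseteq a_n$, so everything reduces to choosing $(a_j)$ so that $Y\notin\CI$. Write $\CI=\bigcup_k G_k$ with each $G_k$ a $G_\delta$ set, $G_k=\bigcap_m U_{k,m}$ with $U_{k,m}$ open, and arrange the $G_k$ to be increasing.

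\textbf{Step 1 (normalization).} I will replace each $G_k$ by a $G_\delta$ set $G'_k\subseteq\CI$ so that $\bigcup_k G'_k=\CI$ still holds, the $G'_k$ are increasing, and each $G'_k$ is invariant under finite modifications. The candidate is to close under the countably many finite-modification maps $A\mapsto A\symdiff F$, which are homeomorphisms: take $G'_k=\bigcap_{F} (G_k\symdiff F)$ over finite $F$. This is a countable intersection of $G_\delta$ sets, hence $G_\delta$. It is contained in $G_k\subseteq\CI$, and it is invariant under finite modifications because $F\mapsto F\symdiff E$ permutes the finite sets.

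The covering property is the issue: one needs every $A\in\CI$ to lie in some $G'_k$. I would try to obtain it by a Baire category argument on the compact group $(\pow{\omega},\symdiff)$ restricted to subsets of $A$. Since $\pow{A}\subseteq\CI$ is covered by the Borel sets $G_k$, one of them is comeager in some basic open subset of $\pow{A}$. Hereditariness and finite-modification invariance of $\CI$ should then let me spread this to all of $\pow{A}$. This is the place where $\boldsymbol{\Sigma}^0_3$, rather than $F_\sigma$, really enters, and I expect it to be the main obstacle. If the clean normalization fails, I will settle for a weaker substitute: for each $k$ and each positive $X$, the set $\set{Z\subseteq X}{Z\notin G_k}$ is ``large'' in a way detectable on finite approximations.

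\textbf{Step 2 (the diagonalization).} I will fix an enumeration of all pairs $(k,s)$ with $s$ ranging over finite approximations, and at stage $j$ handle requirement $k=k(j)$. Since $X_j\notin\CI$, we have $X_j\notin G'_k$, so there is $m$ with $X_j\notin U_{k,m}$. The goal is to show that the choice of $a_{j+1}$ can keep the final $Y$ outside the closed set $U_{k,m}^{c}$'s complement, i.e., keep $Y\in U_{k,m}^{c}$, for a suitable $m$ from this stage on.

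The obstacle is that membership in the closed set $U_{k,m}^c$ is not witnessed by a finite initial segment. I would handle this in two ways. First, finite-modification invariance lets me ignore $Y\cap a_j$ and compare $Y$ with $X_j$ only above $a_j$. Second, I will use a ``fusion'' scheme: at each later stage $j'>j$ I preserve membership in an open neighbourhood that shrinks toward $U_{k,m}^c$. Each choice of $a_{j+1}$ is the least integer satisfying an arithmetical condition on finitely many $X_n\cap[0,N]$ and on the fixed codes of the $U_{k,m}$, which makes $(X_n)\mapsto(a_j)\mapsto Y$ Borel.

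\textbf{Step 3 (verification).} Finally I will check that $Y\notin G'_k$ for every $k$, hence $Y\notin\CI$. I will also check that the map $(X_n)\mapsto Y$ is Borel, being a composition of Borel maps. Together with $Y\setminus X_n\subseteq a_n$, this yields Theorem~\ref{mthm:Sig03}.
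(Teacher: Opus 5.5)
\textbf{There is a genuine gap.} It is more serious than the ``main obstacle'' you anticipate: the normalization in Step~1 is impossible for every Borel ideal.

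\begin{itemize}
\item If $G'$ is a nonempty set invariant under finite modifications, it contains the class $\set{A\symdiff F}{F\in\fin}$ of each of its elements, and this class is dense in $\cantor$. If $G'$ is also $G_\delta$, it is therefore comeager.
\item A Borel ideal is meager (Theorem~\ref{thm:JNT}), so no such $G'$ can lie inside $\CI$.
\item Hence your sets $G'_k$ are necessarily empty, already for $\CI=\fin$. No Baire-category argument can restore $\bigcup_k G'_k=\CI$.
\end{itemize}

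Step~2 has an independent problem. From $X_j\notin U_{k,m}$ you learn nothing about $Y$: the set $Y\setminus a_j$ is a subset of $X_j$ that generally omits infinitely many of its points, and the $G_k$ are not hereditary. Putting $Y$ into a closed set $U_{k,m}^{c}$ for every $k$ is a family of closed requirements. Each must survive infinitely many later switches to the smaller sets $X_{j'}$, and the ``fusion'' scheme gives no mechanism for this. That is the whole content of the theorem. Even the non-uniform statement is not obtained by such a diagonalization; it comes from a contradiction with the $\boldsymbol{\Pi}^0_3$-completeness of $\varnothing\times\fin$.

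\textbf{You are also aiming higher than necessary.} A Borel choice of mod-finite pseudointersections for all decreasing sequences is uniform $P^+$. The definition only asks for $Y\setminus X_n\in\CI$. Your ``infinitely many $n$'' variant is not weaker, since the sequence decreases. The mod-$\CI$ relaxation is what lets the paper restrict to $\CI$-strictly decreasing sequences.

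\textbf{The missing idea is how to make the completeness argument Borel.} The paper proceeds as follows.
\begin{itemize}
\item It partitions $\omega$ into positive sets $B_k$ so that any set meeting each $B_k$ finitely is a pseudointersection.
\item It chooses Talagrand blocks $e_{k,m}$ of each $B_k$ Borelly, using Fremlin's theorem and Arsenin--Kunugui.
\item It fixes a continuous reduction $g$ of $\CI^+$ to the copy $\CJ$ of $\varnothing\times\fin$.
\item It defines the witness self-referentially as $Y=\bigcup_{j\in g(Y)}e_{(j)_0,r_j}$. This is the unique fixed point of a $\tfrac12$-contraction, so it depends Borelly on the sequence.
\end{itemize}
If $Y\in\CI$, then $g(Y)\notin\CJ$ has an infinite column. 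So $Y$ contains infinitely many blocks of some $B_k$ and is positive, a contradiction. Hence $Y\in\CI^+$, so $g(Y)\in\CJ$, and $Y$ meets each $B_k$ in a finite set.
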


To prove Theorem~\ref{mthm:Sig03}, we first establish
a uniform version of the Jalali-Naini--Talagrand theorem
\cite{JN1976,T1980} for Borel ideals.
We then combine this result with a continuous reduction and the contraction mapping theorem to construct the required Borel witness.

\subsection*{Weakly Farah ideals}
After treating $\boldsymbol{\Sigma}^0_3$ ideals, we turn to $\boldsymbol{\Pi}^0_3$ ideals.
Here the main difficulty is finding representations that can be used to construct Borel witnesses.
Solecki's theorem provides a representation of every analytic $P$-ideal by a lower semicontinuous submeasure \cite{S1999}.
For general $\boldsymbol{\Pi}^0_3$ ideals, Farah asked whether every such ideal is Farah \cite[Question~15.2]{Farah}, or equivalently, whether it is a countable intersection of hereditary $F_\sigma$ sets closed under finite changes \cite[Proposition~3.6]{HM2011}.

Dropping the requirement of closure under finite changes gives the class of \emph{weakly Farah ideals}, introduced by Hru\v{s}\'ak and Meza-Alc\'antara \cite[Definition~3.11]{HM2011}.
This class contains all analytic $P$-ideals and retains the hereditary structure that we use to obtain Borel witnesses.
We prove that every weakly Farah ideal is Farah, answering a question of Hru\v{s}\'ak and Meza-Alc\'antara \cite[Section~5, Question~(3)]{HM2011}.
We then show that every weakly Farah ideal is uniformly weakly Ramsey. Here  the hereditary $F_\sigma$ representation allows us to choose Ramsey witnesses in a Borel way.

\begin{mainthm}\label{mthm:Farah}
Every weakly Farah ideal is Farah and uniformly weakly Ramsey.
\end{mainthm}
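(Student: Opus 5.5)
Write $\CI=\bigcap_n H_n$ with each $H_n$ hereditary and $F_\sigma$, as in \cite[Definition~3.11]{HM2011}. Replacing $H_n$ by $H_0\cap\dots\cap H_n$, which is again hereditary and $F_\sigma$, I may assume the sequence is decreasing. Write $H_n=\bigcup_m K_{n,m}$, where each $K_{n,m}$ is closed. I would also make each $K_{n,m}$ hereditary, either by replacing it with its downward closure or by invoking the standard fact that a hereditary $F_\sigma$ set is a countable union of hereditary closed sets. I would then prove the two halves of Theorem~\ref{mthm:Farah} separately.

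\textbf{Farah.} By \cite[Proposition~3.6]{HM2011} it suffices to write $\CI$ as a countable intersection of hereditary $F_\sigma$ sets closed under finite changes. The natural candidate is
\[
H'_n=\set{A\subseteq\omega}{\exists k\,(A\setminus k\in H_n)}.
\]
Each $H'_n$ is $F_\sigma$, being a countable union of continuous preimages of $H_n$. It is hereditary because $H_n$ is. It is closed under finite changes, since for $k$ large enough $(A\symdiff F)\setminus k=A\setminus k$. It contains $\CI$. The inclusion $\bigcap_n H'_n\subseteq\CI$ is the main obstacle: $A\in\bigcap_n H'_n$ only gives $A\setminus k_n\in H_n$ with $k_n$ possibly unbounded. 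The decreasing arrangement helps. If $k=\sup_n k_n<\infty$, then $A\setminus k\in\bigcap_n H_n=\CI$, so $A\in\CI$ because $\CI$ contains $\fin$. If $k_n\to\infty$, I must exclude the case $A\notin\CI$ while every tail of $A$ lies in $H_n$. I expect to do this by modifying the $H_n$ beforehand so that each $H_n$ is \emph{tail-insensitive up to the ideal}. Concretely, I would replace $H_n$ by
\[
\set{A}{\forall s\in[n]^{<\omega}\ (A\cup s\in H_n)}.
\]
This set is still hereditary and $F_\sigma$, as a finite intersection of continuous preimages, and it still contains $\CI$. Using the closed hereditary pieces $K_{n,m}$ and a compactness argument, I would then show that if $A\setminus k_n\in H_n$ for all $n$, some single $k$ works for all $n$. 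This uniformization of the $k_n$ is where I expect the real work to lie.

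\textbf{Uniformly weakly Ramsey.} Given an $\CI$-positive $X$ and the combinatorial data, say a colouring, I would first compute Borelly the least $n$ with $X\notin H_n$. This is a Borel condition, since $H_n$ is $F_\sigma$. Then $X\notin K_{n,m}$ for every $m$, and because $K_{n,m}$ is closed and hereditary, each failure is witnessed by a finite subset $s\subseteq X$ with $s\notin K_{n,m}$. Next I would build the required homogeneous or selector set recursively as a union $\bigcup_m s_m$ of such finite witnesses. At stage $m$, the finite set $s_m$ is the lexicographically least finite subset of the current reservoir that is homogeneous or selective for the given data and lies outside $K_{n,m}$. Leaving $K_{n,m}$ is an open condition, so the least such set depends Borelly on the input. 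Heredity of each $K_{n,m}$ ensures that the union avoids all of $H_n\supseteq\CI$, so the output is positive. I would need to check that such an $s_m$ always exists in the reservoir; this should follow from the positivity of the reservoir, which is maintained by the same least-$n$ choice. Finally, I would combine this with \cite[Theorems~4.8 and~6.2]{HLSZ} to get closed $\CI$-MED families.
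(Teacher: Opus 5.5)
\textbf{Farah half.} Your final family is actually adequate, reading $[n]^{<\omega}$ as the finite subsets of $n$. After making the $H_n$ decreasing and padding, $A\in H'_N$ gives a $k$ with $F\cup(A\setminus k)\in H_n$ for all $n\leq N$ and all finite $F\subseteq N$. The gap is the inclusion $\bigcap_n H'_n\subseteq\CI$, which you leave to ``a compactness argument''. That route cannot work. The sets $(A\cap n)\cup(A\setminus k_n)$ do converge to $A$ and lie in $H_m$ for $n\geq m$. But they may lie in closed pieces $K_{m,j_n}$ with $j_n\to\infty$, so closedness forces nothing. In fact, any argument using only heredity, $F_\sigma$-ness and compactness also applies to $G_n=\fin\cup\set{A}{(\exists j\geq n)\ j\notin A}$. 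These sets are decreasing, hereditary, $F_\sigma$ and unchanged by your padding. We have $\omega\setminus(n+1)\in G_n$ for every $n$, yet $\omega\setminus k\notin G_k$ for every $k$, so no single $k$ works. Moreover $\omega\notin\bigcap_n G_n$, which is the family of finite or co-infinite sets. What distinguishes $\CI$ is closure under finite unions, and the missing idea exploits it by interleaving. Choose $m_0<m_1<\cdots$ alternately so that $B=A\cap\bigcup_j[m_{2j+1},m_{2j+2})$ satisfies $B\subseteq B_n\cup(A\setminus m_{2n+1})\in H_n$. Likewise arrange $D=A\cap\bigcup_j[m_{2j},m_{2j+1})\subseteq D_n\cup(A\setminus m_{2n+2})\in H_n$. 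Here $B_n,D_n$ are the finite parts of $B,D$ already fixed. Then $B,D\in\CI$, so $A=B\cup D\in\CI$. This is why the padding must allow arbitrary finite sets rather than aim for one uniform $k$.

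\textbf{Uniformly weakly Ramsey half.} Your sketch never pins down the required dichotomy: for subadditive $c$, a positive subset of $A$ that is $0$-homogeneous or nowhere $0$-homogeneous. Both of its steps fail. First, a positive reservoir need not contain finite $0$-homogeneous sets outside $K_{n,m}$, because all its $0$-homogeneous subsets may lie in $\CI$. That case must be detected, and then $A$ itself is the answer. Also, your fixed least $n$ does not survive passing to tails, since $H_n$ is not closed under finite changes. The paper uses the tail representation \eqref{eq:farah-representation} by compact $C_n$, which is exactly where the Farah half is needed. If for every $n$ some tail of $A$ has all its finite $0$-homogeneous subsets in $C_n$, closedness puts every $0$-homogeneous $B\subseteq A$ into $\CI$. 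Otherwise a single $n_*$ yields successive $0$-homogeneous blocks $Q_j\notin C_{n_*}$ beyond any point. Second, a union of such blocks is in general neither $0$-homogeneous nor nowhere $0$-homogeneous, because colours between blocks are uncontrolled. Mixing ``homogeneous or selective'' pieces only makes this worse. The missing step is to stabilize, along a subsequence $n_0<n_1<\cdots$, the patterns $\set{x\in Q_{n_\ell}}{c(x,\min Q_k)=0}$ to fixed sets $X_\ell$. Then subadditivity \eqref{eq:common-right-endpoint} shows that $\bigcup_\ell X_\ell$ is $0$-homogeneous. It also shows that in $\bigcup_\ell(Q_{n_\ell}\setminus X_\ell)$ every cross-block pair has colour $1$, so its $0$-homogeneous subsets lie in single finite blocks. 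Since the union of the two sets is $\bigcup_\ell Q_{n_\ell}\notin\CI$, one of them is positive and serves as the output. Your plan never uses subadditivity, and that is what makes this split possible.
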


\subsection*{Closed \texorpdfstring{$\CI$}{I}-MED families}
The two uniformity results yield the following corollary.

\begin{maincor}\label{mcor:MED}
Let $\CI$ be a $\boldsymbol{\Sigma}^0_3$ ideal or a weakly Farah ideal.
Then there is a closed $\CI$-MED family.
\end{maincor}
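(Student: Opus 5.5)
The corollary follows by combining the two main theorems with the criterion of He, Luo, Schrittesser and Zhang. By \cite[Theorems~4.8 and~6.2]{HLSZ}, every Borel ideal $\CI$ that is uniformly weakly Ramsey admits a closed $\CI$-MED family. It is therefore enough to check that in both cases $\CI$ is Borel and uniformly weakly Ramsey.

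For a $\boldsymbol{\Sigma}^0_3$ ideal $\CI$, Borelness is immediate. Theorem~\ref{mthm:Sig03} gives that $\CI$ is uniformly weakly $P^+$, and \cite[Lemma~5.2]{HLSZ} then shows that every Borel uniformly weakly $P^+$ ideal is uniformly weakly Ramsey.

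For a weakly Farah ideal $\CI$, we first recall that by definition it is a countable intersection of hereditary $F_\sigma$ sets. It is therefore $\boldsymbol{\Pi}^0_3$, and in particular Borel. Theorem~\ref{mthm:Farah} gives directly that $\CI$ is uniformly weakly Ramsey. In both cases the HLSZ criterion then yields a closed $\CI$-MED family.

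The only point needing care is that the hypotheses of \cite[Theorems~4.8 and~6.2]{HLSZ} are met exactly: Borelness of the ideal, and the precise form of uniformity (a Borel witness map) used there. I would check that the notion of ``uniformly weakly Ramsey'' established in Theorems~\ref{mthm:Sig03} and~\ref{mthm:Farah} matches the HLSZ definition verbatim. Any real difficulty lies in those two theorems, not in this deduction.
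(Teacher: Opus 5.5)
Your proposal is correct and follows the paper's proof essentially verbatim: both reduce to the criterion of He, Luo, Schrittesser and Zhang (their Theorems~4.8 and~6.2) for uniformly weakly Ramsey ideals. Both obtain that property for $\boldsymbol{\Sigma}^0_3$ ideals via Theorem~\ref{mthm:Sig03} and their Lemma~5.2, and for weakly Farah ideals directly from Theorem~\ref{mthm:Farah}. The only other hypotheses of that criterion the paper spells out are the Baire property and $\fin\subseteq\CI$; both follow from the Borelness you verified and from the definition of an ideal.
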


\subsection*{Organization}
Section~\ref{sec:Preliminaries} fixes notation and collects the facts from descriptive set theory that we use.
In Section~\ref{sec:uniform-p+}, we prove that every $\boldsymbol{\Sigma}^0_3$ ideal is uniformly weakly $P^+$.
Section~\ref{sec:uwR} shows that every weakly Farah ideal is Farah and uniformly weakly Ramsey.
Finally, Section~\ref{sec:applications} gives the applications to closed $\CI$-MED families.

\section{Preliminaries}\label{sec:Preliminaries}

Our terminology and notation in descriptive set theory follow \cite{K1995}.

For $s, t \in \fullBinaryTree$ we write $\abs{s}$ for the length of $s$, and $s \preceq t$ if $t$ extends $s$. 
We let $[s] = \set{x \in \cantor}{s \preceq x}$ be the corresponding basic open set.

Fix a bijection $\func{\code{\cdot}{\cdot}}{\omega\times\omega}{\omega}$.
For $n \in \omega$, we let $(n)_0$ and $(n)_1$ be the unique numbers with $n = \code{(n)_0}{(n)_1}$.

We identify $\pow{\omega}$ with $\cantor$ via characteristic functions and equip it with the induced Polish topology.
We therefore also use set-theoretic notation, such as $x\subseteq y$, $x\cap y$, and $x\setminus y$, for elements $x,y\in\cantor$.

A family $\CC\subseteq\cantor$ is \emph{hereditary} if $B\subseteq A\in \CC$ implies $B\in \CC$.
An \emph{ideal}  is a hereditary family $\CI\subseteq\pow{\omega}$ that contains every finite subset of $\omega$, is closed under finite unions, and does not contain $\omega$.
We write $\CI^+=\pow{\omega}\setminus\CI$. 
For $B \in \CI^+$, let $\CI\restriction{B} = \set{A \cap B}{A \in \CI}$.
Let $\fin$ denote the set of all finite subsets of $\omega$.

\subsection{Maximal eventually different families}
Let $\CI$ be an ideal. A family $\CE\subseteq\omega^\omega$ is an \emph{$\CI$-maximal eventually different family}, or an
\emph{$\CI$-MED family}, if
\[
[f=g]=\set{n\in\omega}{f(n)=g(n)}\in\CI
\]
for all distinct $f,g\in\CE$, and for every $h\in\omega^\omega$, there is $f\in\CE$ such that $[h=f]\in\CI^+$.

\subsection{Descriptive set theory} 
We recall two results from descriptive set theory for later use.

The first is due to Fremlin.

\begin{theorem}[{\cite[Lemma~1.7]{BJ1994}}]\label{thm:Fremlin}
Let $E\subseteq\cantor\times\cantor$ be Borel with every section $E_B$ meager.
Then there are Borel sets $F_n\subseteq\cantor\times\cantor$ with closed nowhere dense sections such that
\[
E\subseteq\bigcup_n F_n.
\]
\end{theorem}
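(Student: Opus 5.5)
The plan is to prove a stronger, uniform Baire-property statement by induction over the Borel hierarchy, and then specialise it to sets with meager sections. Call a set $\func{U}{}{}$ — more precisely, a set $U\subseteq\cantor\times\cantor$ — \emph{coded-open} if $U=\bigcup_{s\in\fullBinaryTree} D_s\times[s]$ for some Borel sets $D_s\subseteq\cantor$. Every coded-open set is Borel and has open sections. Let $\mathcal{A}$ be the family of Borel sets $E\subseteq\cantor\times\cantor$ for which there are a coded-open $U$ and Borel sets $F_n\subseteq\cantor\times\cantor$ with closed nowhere dense sections such that
\[
E_B\symdiff U_B\subseteq\bigcup_n (F_n)_B\quad\text{for every } B\in\cantor .
\]
I will show that $\mathcal{A}$ contains all Borel sets.

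First, $\mathcal{A}$ contains the rectangles $D\times[s]$ with $D$ Borel: take $U=D\times[s]$ and no $F_n$. These rectangles generate the Borel $\sigma$-algebra of $\cantor\times\cantor$. For countable unions, suppose $E^k\in\mathcal{A}$ is witnessed by $U^k$ and $F^k_n$. Then $\bigcup_k E^k$ is witnessed by $\bigcup_k U^k$ and the family of all $F^k_n$. Here $\bigcup_k U^k$ is coded-open with $D_s=\bigcup_k D^k_s$, and the symmetric difference of the unions is contained in the union of the symmetric differences.

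The step needing care is closure under complements. Let $E$ be witnessed by $U=\bigcup_s D_s\times[s]$. Define $U'$ as the sectionwise interior of the complement:
\[
U'=\bigcup_s D'_s\times[s],\qquad D'_s=\set{B}{[s]\cap U_B=\emptyset}=\bigcap_{t \text{ compatible with } s}\bigl(\cantor\setminus D_t\bigr).
\]
This is a countable intersection, so $D'_s$ is Borel and $U'$ is coded-open, with $U'_B=\operatorname{int}(\cantor\setminus U_B)$. Put $F^*=(\cantor\times\cantor)\setminus(U\cup U')$. It is Borel, and its section $F^*_B$ is the boundary of the open set $U_B$, hence closed nowhere dense. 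Then
\[
(\cantor\setminus E_B)\symdiff U'_B\subseteq (E_B\symdiff U_B)\cup F^*_B ,
\]
so the complement of $E$ is witnessed by $U'$ together with the $F_n$ and $F^*$. Thus $\mathcal{A}$ is a $\sigma$-algebra containing the generating rectangles, and so it contains every Borel set.

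Finally, let $E$ be as in the statement, with witnesses $U$ and $F_n$. For each $B$, the set $U_B$ is contained in $E_B\cup\bigcup_n(F_n)_B$, so it is meager. Since $U_B$ is open and $\cantor$ is a Baire space, $U_B=\emptyset$. Hence $E_B\subseteq\bigcup_n(F_n)_B$ for all $B$, that is, $E\subseteq\bigcup_n F_n$.

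The main obstacle is keeping the witnesses Borel through the complement step. This is why I carry the explicit coded-open representation instead of just "Borel with open sections": it makes $D'_s$ a countable Boolean combination of the Borel sets $D_t$, and hence Borel.
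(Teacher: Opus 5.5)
Your proof is correct. The paper does not prove this statement; it quotes it from Bartoszy\'nski--Judah and uses it as a black box in the proof of Lemma~\ref{lem:Borel-JNT}. So there is no argument in the paper to follow. What you give is a self-contained proof: the standard argument that Borel sets have the Baire property, carried out uniformly in the parameter $B$ by a $\sigma$-algebra induction over Borel rectangles, in the spirit of the usual proof of the Montgomery--Novikov theorem.

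The details check out:
\begin{itemize}
\item $[s]\cap U_B=\varnothing$ holds exactly when $B\notin D_t$ for every $t$ compatible with $s$. Hence $D'_s$ is Borel and $U'_B=\operatorname{int}(2^\omega\setminus U_B)$.
\item The section $F^*_B=\overline{U_B}\setminus U_B$ is the boundary of an open set, hence closed nowhere dense.
\item The inclusion $(2^\omega\setminus E_B)\symdiff U'_B\subseteq(E_B\symdiff U_B)\cup F^*_B$ follows from $U_B\cap U'_B=\varnothing$.
\end{itemize}
The meager-sections hypothesis enters only at the end, through the Baire category theorem.

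Compared with citing the lemma, your route proves something stronger. Every Borel set is, uniformly in $B$, equal to a set with open sections coded by Borel sets $(D_s)_s$, modulo a countable union of Borel sets with closed nowhere dense sections. As a by-product, for arbitrary Borel $E$ the set $\set{B}{E_B\text{ meager}}=2^\omega\setminus\bigcup_s D_s$ is Borel. You were right to identify the complement step as the delicate point. Carrying the explicit coding $(D_s)_s$, rather than just ``Borel with open sections,'' is exactly what makes $D'_s$, and hence $U'$ and $F^*$, Borel.
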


The second is a Borel uniformization theorem due to
Arsenin and Kunugui.

\begin{theorem}[{\cite[Theorem~35.46]{K1995}}]
\label{thm:compact-section}
Let $X$ be a standard Borel space, $Y$ a nonempty
Polish space, and $A\subseteq X\times Y$ a Borel set
whose sections $A_x$ are $\sigma$-compact.
Then there is a Borel map $\func{p}{X}{Y}$ such that
\[
p(x)\in A_x
\qquad(\forall x\in\proj_X(A)).
\]
In particular, $\proj_X(A)$ is Borel.
\end{theorem}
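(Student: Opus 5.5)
The plan has two parts. First, handle Borel sets with \emph{compact} sections by a measurable selection argument. Second, reduce the $\sigma$-compact case to a countable union of such sets, which is where the real work lies.

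\textbf{Compact sections.} We may replace $Y$ by a compactification $\widehat Y$: a $\sigma$-compact subset of $Y$ is still $\sigma$-compact in $\widehat Y$, and $A$ stays Borel in $X\times\widehat Y$. If we later arrange $p(x)\in A_x\subseteq Y$ for $x\in\proj_X(A)$, and let $p$ take a fixed value of $Y$ elsewhere, then $p$ maps into $Y$. So assume $Y$ is compact metrizable. Suppose $B\subseteq X\times Y$ is Borel with every section $B_x$ compact. By the standard fact that $x\mapsto B_x$ is then a Borel map $X\to K(Y)$ (Novikov's argument, \cite[28.8]{K1995}), the set $\{x: B_x\neq\emptyset\}=\proj_X(B)$ is the preimage of the Borel set $K(Y)\setminus\{\emptyset\}$, hence Borel. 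Composing $x\mapsto B_x$ with a Borel selector $s\colon K(Y)\setminus\{\emptyset\}\to Y$ (Kuratowski--Ryll-Nardzewski, \cite[12.13]{K1995}) gives a Borel uniformizing function on $\proj_X(B)$.

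\textbf{Reduction to compact sections.} The key step is the Saint Raymond decomposition. If $A\subseteq X\times Y$ is Borel with $\sigma$-compact sections, then $A=\bigcup_n A_n$ with each $A_n$ Borel and every section $(A_n)_x$ compact. Granting this, let $q_n$ be the Borel uniformizing function for $A_n$ from the previous step. Each $P_n=\proj_X(A_n)$ is Borel and $\proj_X(A)=\bigcup_n P_n$, so the projection of $A$ is Borel. Define $p(x)=q_n(x)$ for the least $n$ with $x\in P_n$, and $p(x)=y_0$ (a fixed point of $Y$) if $x\notin\proj_X(A)$. This $p$ is Borel, being defined piecewise on a countable Borel partition, and $p(x)\in A_x$ for every $x\in\proj_X(A)$.

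\textbf{Main obstacle.} The decomposition is the hard step. I would prove it by a uniform version of Hurewicz's theorem, with $Y$ compact as above. A section $A_x$ fails to be $\sigma$-compact exactly when it contains a relatively closed copy of $\omega^\omega$. Equivalently, player II wins the Kechris--Saint Raymond covering game attached to $A_x$. The determinacy of these games, together with a boundedness argument, should show two things:
\begin{itemize}
\item the canonical derivative process that peels off compact pieces of $A_x$ terminates at a countable stage;
\item that stage is bounded below $\omega_1$ uniformly in $x$.
\end{itemize}
The boundedness would come from $\boldsymbol{\Sigma}^1_1$-boundedness of a $\boldsymbol{\Pi}^1_1$-rank, since the hypothesis says every section is $\sigma$-compact. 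With that uniform bound, the compact pieces at each stage are Borel in $x$, and enumerating them gives the $A_n$.

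If this direct approach becomes unwieldy, I would fall back on the alternative route through Louveau's separation theorem. That route produces a Borel set $C\supseteq A$ which is a countable union of sets with compact sections, namely $\boldsymbol{\Sigma}^0_2$ in a uniform sense. One then intersects the pieces of $C$ with $A$ and recurses on the remainder, again using boundedness to stop at a countable stage.
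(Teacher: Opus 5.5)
Your reduction is correct, and it is the standard derivation of the cited theorem; the paper itself gives no proof and only cites Kechris. You compactify $Y$, split $A$ into countably many Borel sets $A_n$ with compact sections, and apply Novikov's theorem and a Borel selector on $K(\widehat Y)$ to each $A_n$. The compactification step, the Borelness of $\proj_X(A)=\bigcup_n\proj_X(A_n)$, and the least-$n$ gluing are all fine. If you simply cite Saint Raymond's decomposition theorem, the argument is complete.

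What you have not done is prove that decomposition, which carries the entire content of the theorem, and your sketch is not yet a proof. Termination of a transfinite derivative on closed sets at a countable stage is automatic by second countability. The Hurewicz dichotomy is needed for a different claim: that the pieces peeled off exhaust $A_x$.

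More seriously, $\boldsymbol{\Sigma}^1_1$-boundedness of a $\boldsymbol{\Pi}^1_1$-rank, and your assertion that ``the compact pieces at each stage are Borel in $x$'', both require a derivative that depends in a Borel way on $(x,F)$. Consider the natural derivative that peels off compact subsets of $A_x$: take a closed $F\subseteq\widehat Y\times\omega^\omega$ projecting onto $A_x$ and delete the basic open pieces $W$ with $\overline{\proj_{\widehat Y}(F\cap W)}\subseteq A_x$. It tests conditions of the form $K\subseteq A_x$ with $K$ compact. These are $\boldsymbol{\Pi}^1_1$ but not Borel in general, even for a single $K_\sigma$ set: by Hurewicz, $\set{K\in K([0,1])}{K\subseteq\mathbb{Q}}$ is $\boldsymbol{\Pi}^1_1$-complete. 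So the theory of Borel derivatives does not apply as stated. The step you pass over in one sentence is exactly what Saint Raymond's proof has to overcome, for instance via canonically chosen strategies in the covering game, or by effective methods and $\Delta^1_1$-uniformity.

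Your fallback is also misdescribed. The complement of $A$ is Borel, so a set separating $A$ from its complement must equal $A$. The relevant consequence of Louveau's theorem is that a Borel set all of whose sections are $\boldsymbol{\Sigma}^0_2$ is a countable union of sets $(X\times\widehat Y)\setminus\bigcup_m(P_m\times V_m)$ with $P_m$ Borel and $V_m$ open. Apply it in the compact space $\widehat Y$, where each $A_x$ is $F_\sigma$. This gives the decomposition at once, since these sets are Borel with closed, hence compact, sections. No recursion is needed. The recursion you propose would fail anyway, because intersecting a set with compact sections with $A$ does not in general leave compact sections.
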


\subsection{Weakly \texorpdfstring{$P^+$}{P+} and weakly Ramsey ideals}\label{sec:weakly-P+--weakly-Ramsey}

We recall the definition of the  weakly $P^+$ property and weakly Ramsey property.

Let $\CI$ be an ideal and let $\vec{A}=(A_n)_n$ be a
decreasing sequence of $\CI$-positive sets.
\begin{itemize}
\item  A set $A\subseteq\omega$ is a
    \emph{pseudointersection} of $\vec{A}$ if
    $A\setminus A_n\in \fin$ for every $n\in\omega$.
    \item A set $A\subseteq\omega$ is an
    \emph{$\CI$-pseudointersection} of $\vec{A}$ if
    $A\setminus A_n\in\CI$ for every $n\in\omega$.
    \item The sequence $\vec{A}$ is
    \emph{$\CI$-strictly decreasing} if
    $A_n\setminus A_{n+1}\in\CI^+$ for every $n\in\omega$.
\end{itemize}
We say that $\CI$ is \emph{uniformly weakly $P^+$} if
there is a Borel map
\[
\func{\Pi}{(\cantor)^\omega}{\cantor}
\]
such that $\Pi(\vec{A})$ is an $\CI$-positive
$\CI$-pseudointersection of $\vec{A}$ for every decreasing
sequence $\vec{A}$ of $\CI$-positive sets.

For Borel ideals, it suffices to require this only for
$\CI$-strictly decreasing sequences.
To see this, suppose that $\CI$ is Borel and that $\Pi$
satisfies this restricted requirement.
Given a decreasing sequence $\vec{A}=(A_n)_n$ of
$\CI$-positive sets, put
\[
S(\vec{A})=
\set{n\in\omega}{A_n\setminus A_{n+1}\in\CI^+}.
\]
If $S(\vec{A})$ is finite, let $n_0$ be the least integer
such that $A_n\setminus A_{n+1}\in\CI$ for all $n\geq n_0$.
Then $A_{n_0}$ is an $\CI$-positive
$\CI$-pseudointersection of $\vec{A}$, so we use $A_{n_0}$
as the output.

If $S(\vec{A})$ is infinite, let $(n_k)_k$ be its
increasing enumeration.
Then $(A_{n_k})_k$ is $\CI$-strictly decreasing and has
the same $\CI$-pseudointersections as $\vec{A}$.
Thus we use $\Pi((A_{n_k})_k)$ as the output.

Since $\CI$ is Borel, the case distinction and all
choices above are Borel.
Setting the output equal to $\varnothing$ outside the
Borel set of decreasing sequences of $\CI$-positive
sets gives the required Borel map on $(\cantor)^\omega$.

We identify $2^{[\omega]^2}$ with $\cantor$ via a fixed bijection between $[\omega]^2$ and $\omega$.
For a coloring $\func{c}{[\omega]^2}{2}$ and $x<y$, we write $c(x,y)$ for $c(\{x,y\})$.

A coloring $c$ is \emph{subadditive} if
\[
c(x,y) \leq c(x,z) + c(y,z)
\qquad (x < y < z).
\]
Subadditivity is equivalent to the following condition:
\begin{equation}\label{eq:common-right-endpoint}
c(x,z)=c(y,z)=0 \ \implies\ c(x,y)=0
\qquad (x<y<z).
\end{equation}
A set $H \subseteq \omega$ is \emph{$0$-homogeneous} for $c$ if $c \restriction [H]^2 \equiv 0$.
It is \emph{nowhere $0$-homogeneous} for $c$ if every $\CI$-positive set $B \subseteq H$ contains $x<y$ with $c(x,y)=1$; equivalently, every $0$-homogeneous subset of $H$ belongs to $\CI$.

An ideal $\CI$ is \emph{uniformly weakly Ramsey} if there is a Borel map
\[
\func{H}{\INF\times 2^{[\omega]^2}}{\INF}
\]
such that, for every $A\in\CI^+$ and every subadditive coloring
$\func{c}{[\omega]^2}{2}$, the set $H(A,c)$ is an
$\CI$-positive subset of $A$ that is either $0$-homogeneous
or nowhere $0$-homogeneous for $c$.

\subsection{Weakly Farah ideals and Farah ideals}

We recall the definitions of Farah and weakly Farah ideals.

An ideal $\CI$ is \emph{Farah} if there is a sequence $(C_n)_{n\in\omega}$ of compact hereditary families such that
\begin{equation}\label{eq:farah-representation}
A\in\CI
\quad\iff\quad
(\forall n \in \omega)\,(\exists m \in \omega)\ A\setminus m\in C_n.
\end{equation}
It is \emph{weakly Farah} if $\CI=\bigcap_n H_n$ for some hereditary $F_\sigma$ sets $H_n$ \cite[Definition~3.11]{HM2011}.
A family $\CC \subseteq \pow{\omega}$ is \emph{closed under finite changes} if $A \in \CC$ and $\abs{A \symdiff A'} < \infty$ imply $A' \in \CC$.

\begin{theorem}[{\cite[Proposition~3.6]{HM2011}}]\label{thm:Farah-ideal}
An ideal $\CI$ is Farah if and only if $\CI=\bigcap_n F_n$ for some sequence $(F_n)_{n\in\omega}$ of hereditary $F_\sigma$ sets closed under finite changes.
\end{theorem}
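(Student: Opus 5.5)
The plan is to prove the two directions separately. The forward direction will come straight from the definition. For the converse I will first replace the closed pieces of each $F_n$ by compact hereditary pieces, and then gather the countably many pieces making up one $F_n$ into a single compact hereditary family.

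\emph{($\Rightarrow$)} Let $(C_n)_n$ witness \eqref{eq:farah-representation}. Put
\[
F_n=\set{A}{\exists m\ A\setminus m\in C_n}=\bigcup_m \set{A}{A\setminus m\in C_n}.
\]
Each set in this union is the preimage of the compact set $C_n$ under the continuous map $A\mapsto A\setminus m$, so it is closed. Hence $F_n$ is $F_\sigma$.

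Heredity of $F_n$ follows from heredity of $C_n$. For closure under finite changes, suppose $A\setminus m_0\in C_n$ and $\abs{A\symdiff A'}<\infty$. Choose $m\ge m_0$ beyond every element of $A\symdiff A'$. Then $A'\setminus m=A\setminus m\subseteq A\setminus m_0$, so $A'\setminus m\in C_n$ by heredity. Finally, $\CI=\bigcap_n F_n$ is exactly \eqref{eq:farah-representation}.

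\emph{($\Leftarrow$)} Write $F_n=\bigcup_k K_{n,k}$ with each $K_{n,k}$ closed. The first step is to make the pieces hereditary. The downward closure of a compact $K\subseteq\cantor$ is the projection of the closed set $\set{(B,A)}{B\subseteq A,\ A\in K}$ in the compact space $\cantor\times\cantor$, so it is compact. Because $F_n$ is hereditary, replacing each $K_{n,k}$ by its downward closure keeps it inside $F_n$. Replacing $K_{n,k}$ by $\bigcup_{i\le k}K_{n,i}$ then makes the pieces increasing in $k$. So I may assume every $K_{n,k}$ is compact and hereditary, and $K_{n,k}\subseteq K_{n,k+1}$.

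The main step is to merge the pieces of one $F_n$ into a single compact family. I will set
\[
C_n=\{\varnothing\}\cup\set{B\neq\varnothing}{B\in K_{n,\min B}}.
\]

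\emph{$C_n$ is closed.} Let $B_i\to B$ with $B_i\in C_n$. If $B\neq\varnothing$, then $\min B_i=\min B$ for all large $i$, so $B\in K_{n,\min B}$ because that set is closed. If $B=\varnothing$, then $B\in C_n$ trivially.

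\emph{$C_n$ is hereditary.} Let $\varnothing\neq B'\subseteq B\in C_n$. Then $\min B'\ge\min B$, so $B'\in K_{n,\min B}\subseteq K_{n,\min B'}$, using heredity and monotonicity of the pieces.

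\emph{$C_n$ captures $F_n$ up to tails.} I claim that $A\in F_n$ if and only if $A\setminus m\in C_n$ for some $m$. Suppose $A\in K_{n,k_0}$ and take $m=k_0$. If $A\setminus m\neq\varnothing$, then $A\setminus m\subseteq A\in K_{n,k_0}\subseteq K_{n,\min(A\setminus m)}$, so $A\setminus m\in C_n$. Conversely, suppose $A\setminus m\in C_n$. Since $C_n\subseteq F_n$ (note $\varnothing\in F_n$ because $F_n\supseteq\CI$), we get $A\setminus m\in F_n$. As $F_n$ is closed under finite changes, $A\in F_n$.

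It follows that $A\in\CI$ if and only if $(\forall n)(\exists m)\ A\setminus m\in C_n$, which is \eqref{eq:farah-representation}.

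The only delicate point is the choice of $C_n$. The naive candidate $\bigcap_k\set{B}{B\setminus k\in K_{n,k}}$ does not obviously capture every tail of an $A\in F_n$. Indexing the piece by $\min B$ avoids this problem, and checking that $C_n$ is closed and hereditary is exactly where the monotone, hereditary normalization of the $K_{n,k}$ is used.
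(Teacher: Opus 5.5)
Your proof is correct in both directions. In the forward direction, $F_n=\bigcup_m\set{A}{A\setminus m\in C_n}$ is hereditary, $F_\sigma$ and closed under finite changes, as you check. In the converse, the downward-closure and monotonicity normalization is legitimate, and each property of $C_n$ holds for the reason you give:
\begin{itemize}
\item $C_n$ is closed because, when $B\neq\varnothing$, eventually $\min B_i=\min B$.
\item $C_n$ is hereditary because the normalized pieces are hereditary and increasing.
\item $C_n$ captures $F_n$ up to tails. Taking $m=k_0$ handles the direction from $A\in F_n$. Closure of $F_n$ under finite changes is needed only to pass from $A\setminus m\in F_n$ back to $A\in F_n$.
\end{itemize}

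The paper gives no proof of this statement to compare against. It imports the result from \cite[Proposition~3.6]{HM2011} and uses it as a black box, in two places:
\begin{itemize}
\item to reduce Proposition~\ref{prop:weakly-farah-is-farah} to producing the hereditary, finite-change-closed $F_\sigma$ sets $T_{n,F}$;
\item to obtain the compact hereditary families $C_n$ used in the proof of Theorem~\ref{thm:weakly-farah-ideals-uwr}.
\end{itemize}
Your family $C_n=\{\varnothing\}\cup\set{B\neq\varnothing}{B\in K_{n,\min B}}$ makes that black box explicit. Indexing the compact piece by $\min B$ is what merges the countably many pieces of $F_n$ into one compact hereditary set. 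It also puts $\varnothing\in C_n$ by construction, which the paper needs in Case~2 of that proof.
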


In particular, every Farah ideal is weakly Farah, and every weakly Farah ideal is $\boldsymbol{\Pi}^0_3$.

\section{\texorpdfstring{$\boldsymbol{\Sigma}^0_3$}{Sigma03} ideals}\label{sec:uniform-p+}

In this section, we prove that every
$\boldsymbol{\Sigma}^0_3$ ideal is uniformly weakly $P^+$.
For the reader's convenience, we first recall why such ideals are weakly $P^+$, following the argument in
\cite[Proposition~3.2(ix)]{FKL2026}.

Let $\CI$ be a $\boldsymbol{\Sigma}^0_3$ ideal.
Given an $\CI$-strictly decreasing sequence
$\vec{A}=(A_n)_n$ of $\CI$-positive sets, define
\begin{equation}\label{eq:Sigma3-partition}
B_0=(\omega\setminus A_1)\cup\bigcap_k A_k,
\qquad
B_k=A_k\setminus A_{k+1}\quad(k\geq1).
\end{equation}
These sets partition $\omega$ into $\CI$-positive sets:
$B_0$ contains $A_0\setminus A_1$, and each $B_k$ with
$k\geq1$ is positive by the strict decrease of $\vec{A}$.
Since $\omega\setminus A_n\subseteq\bigcup_{k\leq n}B_k$
for every $n\in\omega$, we have
\begin{equation}\label{eq:part-prop}
(\forall k)\ \abs{B\cap B_k}<\infty
\quad\implies\quad
(\forall n)\ \abs{B\setminus A_n}<\infty
\end{equation}
for every $B\subseteq\omega$.
Thus any set meeting each $B_k$ in a finite set
is a pseudointersection of $\vec{A}$.
The Jalali-Naini--Talagrand theorem then partitions each
$B_n$ into nonempty finite blocks $(e_{n,m})_{m\in\omega}$
such that every infinite union of these blocks is
$\CI$-positive.

If $\vec{A}$ had no positive pseudointersection, then,
for every $X\subseteq\omega \times \omega$,
\[
\bigcup_{(n,m)\in X}e_{n,m}\in\CI
\quad\iff\quad
X\in\varnothing\times\fin,
\]
where
\[
\varnothing\times\fin
= \set{X\subseteq\omega \times \omega}{(\forall n\in\omega)\ \abs{X_n}<\infty},
\qquad
X_n=\set{m\in\omega}{(n,m)\in X}.
\]
Indeed, if every $X_n$ is finite, the union is almost
contained in every $A_k$ and hence belongs to $\CI$
by assumption.
If some $X_n$ is infinite, the union contains an infinite
union of blocks from $B_n$ and is therefore $\CI$-positive.

The map
\[
F:2^{\omega \times \omega}\longrightarrow 2^\omega,
\qquad
F(X)=\bigcup_{(n,m)\in X}e_{n,m},
\]
is continuous, and the equivalence above gives
\[
\varnothing\times\fin=F^{-1}(\CI).
\]
Since $\CI$ is $\boldsymbol{\Sigma}^0_3$, its continuous
preimage $\varnothing\times\fin$ would also be
$\boldsymbol{\Sigma}^0_3$, contradicting the
$\boldsymbol{\Pi}^0_3$-completeness of
$\varnothing\times\fin$.

To make this argument uniform, we first prove a uniform
version of the Jalali-Naini--Talagrand theorem, which allows
the finite blocks $e_{n,m}$ to be chosen in a Borel way
from $\vec{A}$.
We then use a fixed continuous reduction to define a
Borel family of contractions. Their unique fixed points
give the required positive pseudointersections.

\subsection{A uniform Jalali-Naini--Talagrand theorem}\label{subsec:JNT}
Recall that an ideal $\CJ$ is \emph{Rudin--Blass reducible to}
an ideal $\CI$, written $\CJ\leq_{RB}\CI$, if there is a
finite-to-one map $\func{f}{\omega}{\omega}$ such that,
for every $X\subseteq\omega$,
\[
X\in\CJ \quad\iff\quad f^{-1}[X]\in\CI.
\]

Mathias \cite{M1973} proved that $\fin\leq_{RB}\CI$
for every analytic ideal $\CI$.
Jalali-Naini \cite{JN1976} and Talagrand \cite{T1980}
independently characterized the ideals satisfying this
reducibility condition in terms of Baire category.

\begin{theorem}[Jalali-Naini, Talagrand]\label{thm:JNT}
For an ideal $\CI$, the following are equivalent:
\begin{enumerate}
    \item $\CI$ has the Baire property;
    \item $\CI$ is meager;
    \item $\fin\leq_{RB}\CI$.
\end{enumerate}
\end{theorem}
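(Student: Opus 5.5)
We prove the cycle of implications $(1)\Rightarrow(2)\Rightarrow(3)\Rightarrow(1)$. The two implications into and out of $(3)$ are essentially formal, and the substance lies in $(1)\Rightarrow(2)$ and $(2)\Rightarrow(3)$.

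\emph{$(3)\Rightarrow(1)$.} Let $f$ witness $\fin\leq_{RB}\CI$. Put $e_k=f^{-1}[\{k\}]$; these are finite sets, and $\omega=\bigcup_k e_k$. Since $f^{-1}[\omega]=\omega\notin\CI$, we may discard the empty $e_k$, and infinitely many remain. Every infinite union of the $e_k$ is of the form $f^{-1}[X]$ with $X$ infinite, hence lies in $\CI^+$. So every $A\in\CI$ misses $e_k$ for all but finitely many $k$. Thus
\[
\CI\subseteq\bigcup_n\set{A}{(\forall k\geq n)\ A\cap e_k=\varnothing}.
\]
Each set in this union is closed and nowhere dense, because any basic open set can be extended to meet some $e_k$ with $k\geq n$. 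Hence $\CI$ is meager, and in particular it has the Baire property.

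\emph{$(1)\Rightarrow(2)$.} Suppose $\CI$ has the Baire property but is not meager. Then $\CI$ is comeager in some basic open set $[s]$, where $s$ is a finite approximation. Flipping the finitely many coordinates below $|s|$ is a homeomorphism that preserves $\CI$, because $\CI$ is closed under finite changes (it contains $\fin$). Hence $\CI$ is comeager in every basic open set and so comeager in $\cantor$. The complementation map $A\mapsto\omega\setminus A$ is also a homeomorphism, so $\set{A}{\omega\setminus A\in\CI}$ is comeager too. The intersection of the two comeager sets is nonempty, so there is $A$ with $A\in\CI$ and $\omega\setminus A\in\CI$. Then $\omega=A\cup(\omega\setminus A)\in\CI$, a contradiction.

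\emph{$(2)\Rightarrow(3)$.} This is the main step. Write $\CI\subseteq\bigcup_n N_n$ with the $N_n$ closed nowhere dense and increasing. We build finite intervals $I_0<I_1<\cdots$ and sets $a_k\subseteq I_k$ by the standard Talagrand argument. At stage $k$, let $m=\max I_{k-1}+1$. Enumerate all $2^m$ subsets $t\subseteq m$. By nowhere density of $N_k$, extend successively to find a finite $a_k\subseteq[m,M)$ such that for every $t\subseteq m$ and every $x$ with $x\cap[m,M)=a_k$ and $x\cap m=t$, we have $x\notin N_k$. Then set $I_k=[m,M)$. The key technical point, and the one requiring care, is arranging \emph{positivity} rather than just avoidance of $N_k$. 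For this, the $a_k$ should be chosen to work uniformly over all finite initial patterns. Then any $x$ containing infinitely many $a_k$ avoids infinitely many $N_k$, hence avoids all $N_n$ since the $N_n$ are increasing, and is therefore in $\CI^+$. Since $\CI$ is hereditary, unions of infinitely many of the blocks $a_k$ are positive.

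It remains to produce an actual partition of $\omega$. Set $e_k=I_k$ (together with $[0,\min I_0)$ absorbed into $e_0$), and define $f(n)=k$ for $n\in I_k$. Then $f$ is finite-to-one. If $X$ is finite, then $f^{-1}[X]$ is finite and so in $\CI$. If $X$ is infinite, then $f^{-1}[X]\supseteq\bigcup_{k\in X}a_k$, which is positive. This gives $\fin\leq_{RB}\CI$. The main obstacle is the uniform choice of $a_k$ over all $2^m$ initial patterns; this is handled by a finite iteration of nowhere density, one pattern at a time, each time extending the current block.
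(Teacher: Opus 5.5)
Your $(1)\Rightarrow(2)$ argument (the usual $0$--$1$ law) is correct. The genuine error is in $(3)\Rightarrow(1)$. The claim that every $A\in\CI$ \emph{misses} all but finitely many $e_k$ is false, and so is the displayed inclusion. For a counterexample, let $\CI=\set{A\subseteq\omega}{A\text{ contains only finitely many even numbers}}$ and $e_k=\{2k,2k+1\}$. Then $f$ is a Rudin--Blass reduction of $\fin$ to $\CI$, yet the set of odd numbers lies in $\CI$ and meets every $e_k$. Your premise, that infinite unions of blocks are positive, only controls sets that \emph{contain} infinitely many blocks. The correct statement is this: put $K=\set{k}{e_k\neq\varnothing}$; if $A\supseteq e_k$ for infinitely many $k\in K$, then $A\supseteq f^{-1}[X]$ for an infinite $X$, so $A\in\CI^+$ because $\CI$ is hereditary. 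Hence
\[
\CI\subseteq\bigcup_n\set{A}{(\forall k\in K\setminus n)\ e_k\not\subseteq A}.
\]
Each set on the right is closed, being an intersection of clopen sets since the $e_k$ are finite. It is also nowhere dense: only finitely many $e_k$ meet $\abs{s}$, so any finite $s$ can be extended to contain some $e_k$ with $k\in K\setminus n$ and $\min e_k\geq\abs{s}$. With this change the implication goes through.

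In $(2)\Rightarrow(3)$, your finite iteration over the $2^m$ prefixes, producing one pattern $a_k$ good for all of them, is right. However, the sentence ``any $x$ containing infinitely many $a_k$ avoids infinitely many $N_k$'' repeats the same containment-versus-exact-agreement confusion. Your choice of $a_k$ only guarantees $x\notin N_k$ when $x\cap I_k=a_k$ \emph{exactly}. The $N_n$ merely have to cover $\CI$, so $N_k$ may contain, say, $\omega$, which contains every $a_k$. What is true is that $x=\bigcup_{k\in X}a_k$ agrees with $a_k$ on $I_k$ for every $k\in X$. So $x\notin N_k$ for infinitely many $k$, hence $x\notin\bigcup_n N_n\supseteq\CI$, and heredity then makes every superset positive, in particular $f^{-1}[X]$. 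Your final paragraph uses only this, so the conclusion survives once that sentence is corrected.

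For comparison, the paper does not prove this theorem; it cites it. Its Lemma~\ref{lem:Borel-JNT} redoes $(2)\Rightarrow(3)$ in Talagrand's style under a weaker requirement: each prefix $s\in 2^{m_n}$ needs only \emph{some} extension $t\in 2^{m_{n+1}}$ with $[t]$ disjoint from $F_n$. It then builds the witness $z\subseteq\bigcup_{n\in C}I_n$ adaptively, rather than fixing one pattern per block as you do.
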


If $\CI$ is Borel, then for every $B\in\CI^+$,
the restriction $\CI\restriction B$ is Borel in $2^B$
and hence has the Baire property.
Applying Theorem~\ref{thm:JNT} to $\CI\restriction B$
gives a partition $(e_n)_{n\in\omega}$ of $B$ into
nonempty finite sets such that
\[
\bigcup_{n\in C}e_n\in\CI^+
\qquad (\forall C\in\INF).
\]
We next show that such a partition can be chosen
in a Borel way from $B\in\CI^+$.

\begin{lemma}\label{lem:Borel-JNT}
Let $\CI$ be a Borel ideal.
There is a Borel map
\[
\func{\Lambda}{\CI^+}
{([\omega]^{<\omega}\setminus\{\varnothing\})^\omega}
\]
such that, for every $B\in\CI^+$, the sequence
$\Lambda(B)=(e_n(B))_{n\in\omega}$ satisfies:
\begin{itemize}
    \item $\max e_n(B)<\min e_{n+1}(B)$ for every $n\in\omega$;
    \item $\bigcup_{n\in\omega}e_n(B)=B$;
    \item $\bigcup_{n\in C}e_n(B)\in\CI^+$ for every $C\in\INF$.
\end{itemize}
\end{lemma}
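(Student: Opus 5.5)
We follow the standard proof of the Jalali-Naini--Talagrand theorem and make each choice Borel in $B$, using Fremlin's theorem (Theorem~\ref{thm:Fremlin}) for a uniform covering by closed nowhere dense sets. Consider
\[
E=\set{(B,X)\in\cantor\times\cantor}{B\in\CI^+,\ X\cap B\in\CI}\cup\set{(B,X)}{B\in\CI},
\]
which is Borel because $\CI$ is Borel. For $B\in\CI^+$ the section $E_B$ is the preimage of $\CI\restriction B$ under the open continuous surjection $X\mapsto X\cap B$ onto $2^B$. Now $\CI\restriction B$ is a Borel ideal on $B$; it is proper since $B\notin\CI$, and it contains $\fin$. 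By Theorem~\ref{thm:JNT} it is meager in $2^B$, so $E_B$ is meager. For $B\in\CI$ the section is all of $\cantor$, so we replace $E$ by its intersection with $\CI^+\times\cantor$, which is Borel with all sections meager. Theorem~\ref{thm:Fremlin} then gives Borel sets $F_k$ with closed nowhere dense sections covering $E$. We may assume the sections increase in $k$, replacing $F_k$ by $\bigcup_{j\le k}F_j$.

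Next we build the blocks recursively. The key finite step is as follows. Suppose $B\in\CI^+$, a finite $s\subseteq\omega$, an integer $m$, and $k$ are given. Then there is a finite $e\subseteq B\setminus m$ such that for every $t\subseteq s$ (in fact every $t\subseteq m$), every $X$ with $X\cap m=t$ and $X\cap[m,\max e]=e$ avoids $(F_k)_B$. This is the usual argument: run through the finitely many $t\subseteq m$ and extend one finite piece successively using nowhere density. The extension must be chosen inside $B$. We handle this by working with the sections $\set{Y\subseteq B}{\ldots}$ transported to $2^B$, or equivalently by noting that points outside $B$ do not matter, since membership in $E$ depends only on $X\cap B$ and the sets $F_k$ may be intersected accordingly. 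The precise fix I would try first is to apply Fremlin to the set $E'\subseteq\CI^+\times\cantor$ defined through an enumeration of $B$, namely $(B,x)\mapsto \{b_i:x(i)=1\}$ where $(b_i)$ is the increasing enumeration of $B$, which is continuous in $B\in[\omega]^\omega$. Then everything lives on indices and the blocks are pulled back along the enumeration. Since the required finite $e$ exists and the condition ``$e$ works'' is Borel in $(B,s,m,e)$, we choose the least $e$ in a fixed enumeration of $[\omega]^{<\omega}$. This is a countable search, hence a Borel choice. Alternatively, Theorem~\ref{thm:compact-section} can be applied with countable (hence $\sigma$-compact) sections.

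To fill all of $B$, set $m_0=0$. At stage $n$, pick $e'_n$ as above for $F_n$ and $m_n$, and let $e_n=(B\cap[m_n,\max e'_n])$. Putting all of $B$ below $\max e'_n$ into the block is harmless after adjusting the argument: the genuine JNT blocks are the intervals $[m_n,m_{n+1})$ intersected with $B$, and positivity holds because any union $X$ over an infinite $C$ agrees, on interval $n\in C$, with a configuration forcing $X\notin (F_n)_B$ for infinitely many $n$. Since the sections increase, $X\notin\bigcup_k(F_k)_B\supseteq E_B$, so $X\cap B\in\CI^+$. The correct formulation is that the extension $e'$ must make every $X$ with $X\cap m=t$ and $X\cap[m,\max e']$ equal to $e'$ avoid $F_k$, while the block interval of $X$ is either full, when $n\in C$, or empty. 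So I would instead choose the blocks as intervals $I_n=[m_n,m_{n+1})$ satisfying: for every $t\subseteq m_n$, every $X\supseteq t\cup (I_n\cap B)$ with $X\cap I_n\subseteq B$ avoids $(F_n)_B$. This is obtained by density applied finitely often, and nonemptiness of $I_n\cap B$ is ensured since $B$ is infinite. Then $e_n=I_n\cap B$. The Borelness of $B\mapsto(m_n)_n$ follows because each $m_{n+1}$ is the least integer satisfying a Borel condition. The main obstacle is verifying the finite step, namely that one end-extension works simultaneously for all $t$ and for all supersets with arbitrary behaviour beyond $m_{n+1}$. This follows from closedness of $(F_n)_B$: a basic open set disjoint from it can be found inside each $[t\cup e]$ successively.
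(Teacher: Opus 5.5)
Your overall architecture matches the paper's. You transport $\CI\restriction B$ to $\cantor$ along the increasing enumeration of $B$, cover the resulting meager sections by increasing Fremlin sets $F_n$, and let $m_{n+1}$ be the least integer satisfying a Borel condition. The gap is in the block condition you finally settle on. In index coordinates it says that for every $t\subseteq m_n$, every $X\supseteq t$ which is \emph{full} on $I_n$, i.e.\ $X\cap I_n=I_n$, avoids $(F_n)_B$.

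Nowhere density does not give this. It only says that every cylinder $[s]$ contains some subcylinder disjoint from $(F_n)_B$, with no control over which pattern appears on $I_n$. Fremlin's theorem also gives no control over the extra points the $F_n$ contain. Concretely, $K=\{[a,\infty):a\in\omega\}\cup\{\varnothing\}$ is a countable closed, hence nowhere dense, subset of $\cantor$. Replacing $F_n$ by $F_n\cup(\cantor\times K)$ preserves everything Theorem~\ref{thm:Fremlin} provides. But $X=[m_n,\infty)$ is full on $I_n$, so no $m_{n+1}$ meets your requirement for $t=\varnothing$. For the same reason, your earlier claim that the union over $C$ ``agrees, on interval $n\in C$, with a configuration forcing $X\notin(F_n)_B$'' is false. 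The union is full on those intervals, while the forcing configuration is whatever pattern density produced.

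The missing idea is heredity. Choose $m_{n+1}$ so that every $s\in 2^{m_n}$ has \emph{some} extension $t\in 2^{m_{n+1}}$ with $[t]\cap(F_n)_B=\varnothing$; density applied finitely often does give this. Given an infinite $C$, build $z$ recursively: set it to $0$ on $I_n$ for $n\notin C$, and let it follow such a $t$ on $I_n$ for $n\in C$. Then $z\subseteq X=\bigcup_{n\in C}I_n$ and $z\notin\bigcup_n(F_n)_B\supseteq E_B$. Since $E_B$ is hereditary, $X\notin E_B$; the union itself never needs to avoid any $F_n$.

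A smaller point: you assert that ``$e$ works'' is Borel in $B$, but the condition $[u]\cap(F_n)_B=\varnothing$ is a priori only co-analytic. It is Borel because $F_n\cap(\cantor\times[u])$ has compact sections, so its projection is Borel by Theorem~\ref{thm:compact-section}. That is where the theorem is needed. A selection over countable sections is unnecessary once the condition is Borel, since you simply take the least witness.
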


\begin{proof} We follow Talagrand's construction \cite{T1980} and verify
that it yields a Borel map.
We first represent the restrictions of $\CI$ on the fixed
space $\cantor$.
Define the continuous map
$\func{f}{\cantor\times\cantor}{\cantor}$ by
\[
k\in f(B,x)
\quad\iff\quad
k\in B \ \land\ |B\cap k|\in x.
\]
For infinite $B$, the map $f(B,\cdot)$ is the homeomorphism
from $\cantor$ onto $\pow{B}$ induced by the increasing
enumeration of $B$.

Consider the Borel set
\[
E=
\set{(B,x)\in\cantor\times\cantor}
{B\in\CI^+ \text{ and } f(B,x)\in\CI}.
\]
If $B\in\CI^+$, then $E_B$ is a Borel ideal,
so it is meager by Theorem~\ref{thm:JNT}.
Otherwise, $E_B=\varnothing$.
By Theorem~\ref{thm:Fremlin}, there are Borel sets
$F_n\subseteq\cantor\times\cantor$ with closed nowhere
dense sections such that $E\subseteq\bigcup_n F_n$.
Replacing $F_n$ by $\bigcup_{j\leq n}F_j$, we may assume
that $F_n\subseteq F_{n+1}$.

We next choose the endpoints of the finite blocks.
For fixed $n\in\omega$ and $t\in2^{<\omega}$, the set
\[
\set{B\in\cantor}{[t]\cap(F_n)_B\neq\varnothing}
\]
is the projection of
$F_n\cap(\cantor\times[t])$ onto the first coordinate.
Its sections are compact, so this projection is Borel
by Theorem~\ref{thm:compact-section}.
Consequently, the condition
$[t]\cap(F_n)_B=\varnothing$ is Borel in $B$.

Set $m_0(B)=0$.
Given $m_n(B)$, let $m_{n+1}(B)$ be the least
$\ell>m_n(B)$ such that
\begin{equation}\label{eq:JNT-extension}
(\forall s\in2^{m_n(B)})(\exists t\in2^\ell)\,
\bigl(s\preceq t \ \land\
[t]\cap(F_n)_B=\varnothing\bigr).
\end{equation}
Such an $\ell$ exists because $(F_n)_B$ is closed nowhere
dense and $2^{m_n(B)}$ is finite.
The preceding observation makes
\eqref{eq:JNT-extension} a Borel condition on $(B,\ell)$,
provided that $m_n$ is Borel.
Thus each map $m_n:\cantor\to\omega$ is Borel by induction.

For $B\in\CI^+$, set
\[
I_n(B)=[m_n(B),m_{n+1}(B)),
\qquad
e_n(B)=f(B,I_n(B)),
\]
and define $\Lambda(B)=(e_n(B))_{n\in\omega}$.
The intervals $I_n(B)$ partition $\omega$ into nonempty
finite sets.
Hence the sets $e_n(B)$ partition $B$ into nonempty
finite sets and satisfy
\[
\max e_n(B)<\min e_{n+1}(B).
\]
Moreover, $\Lambda$ is Borel, since
\[
k\in e_n(B)
\quad\iff\quad
k\in B \ \land\
m_n(B)\leq |B\cap k|<m_{n+1}(B).
\]

It remains to prove that every infinite union of the
blocks is $\CI$-positive.
Fix $B\in\CI^+$ and $C\in\INF$, and set
\[
X=\bigcup_{n\in C}I_n(B).
\]
Since $E_B$ is hereditary, it suffices to construct
$z\subseteq X$ with $z\notin E_B$.

Construct $z$ successively on the intervals $I_n(B)$.
If $n\notin C$, set $z$ equal to $0$ on $I_n(B)$.
If $n\in C$, use \eqref{eq:JNT-extension} to extend
$z\restriction m_n(B)$ to a sequence $t$ of length
$m_{n+1}(B)$ such that
$[t]\cap(F_n)_B=\varnothing$.

Then $z\subseteq X$ and $z\notin(F_n)_B$ for every
$n\in C$.
Since $C$ is infinite and the sets $F_n$ are increasing,
$z$ lies outside every $(F_n)_B$, and hence outside $E_B$.
It follows that $X\notin E_B$, so
\[
\bigcup_{n\in C}e_n(B)=f(B,X)\in\CI^+,
\]
as required.
\end{proof}

Recall that, for $q\geq 0$, a self-map $T$ of a metric space $(X,d)$ is \emph{$q$-Lipschitz} if
\[
d(T(x),T(y))\leq q\,d(x,y) \qquad(\forall x,y\in X).
\]
It is a \emph{contraction} if it is $q$-Lipschitz for some $q\in[0,1)$.
The contraction mapping theorem guarantees that every contraction on a nonempty complete metric space has a unique fixed point.
The following lemma gives Borel dependence of this fixed point on the parameter.

\begin{lemma}\label{lemma:Borel-fixed-points}
Let $D$ be a standard Borel space and $X$ a nonempty complete separable metric space.
Suppose that $\func{F}{D\times X}{X}$ is Borel and, for every $x\in D$, the map $\func{F_x}{X}{X}$ defined by $F_x(z)=F(x,z)$ is a contraction.
Then there is a unique Borel map $\func{p}{D}{X}$ such that
\[
p(x)=F_x(p(x))
\qquad(x\in D).
\]
\end{lemma}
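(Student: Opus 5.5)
We will obtain $p$ as the pointwise limit of Picard iterates, starting from a constant map. Uniqueness is immediate: for each $x$, the contraction mapping theorem says $F_x$ has exactly one fixed point, so any map $p$ with $p(x)=F_x(p(x))$ must send $x$ to that point. Hence there is at most one such map, Borel or not, and the work is to show that the pointwise-defined fixed-point map is Borel.

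For existence, fix a point $z_0\in X$, which is possible because $X$ is nonempty. Define $p_0(x)=z_0$ and $p_{n+1}(x)=F(x,p_n(x))$. By induction each $p_n\colon D\to X$ is Borel. Indeed, $x\mapsto(x,p_n(x))$ is Borel from $D$ into $D\times X$: for Borel rectangles $R\times S$ its preimage is $R\cap p_n^{-1}[S]$, and the Borel $\sigma$-algebra of $D\times X$ is generated by such rectangles, using that $X$ is separable. Composing with the Borel map $F$ gives $p_{n+1}$.

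For fixed $x$, let $q_x<1$ be a contraction constant for $F_x$. The usual estimate $d(p_{n+1}(x),p_n(x))\le q_x^n\,d(F_x(z_0),z_0)$ shows that $(p_n(x))_n$ is Cauchy. By completeness it converges to some $p(x)$, and continuity of $F_x$ gives $F_x(p(x))=p(x)$. Note that the constant $q_x$ may depend on $x$ in an uncontrolled way, but this does not matter: we only need convergence at each point, not uniform convergence.

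The main, though mild, obstacle is that a pointwise limit of Borel maps into a metric space is Borel. Since $X$ is separable metric, it suffices to show that $p^{-1}[U]$ is Borel for each open $U$. Write $U=\bigcup_k U_k$ with $U_k=\{y: d(y,X\setminus U)>1/k\}$ open and $\overline{U_k}\subseteq U$. Then
\[
p^{-1}[U]=\bigcup_k\bigcup_N\bigcap_{n\ge N}p_n^{-1}[U_k].
\]
To verify the inclusion from left to right, suppose $p(x)\in U$ and pick $k$ with $d(p(x),X\setminus U)>2/k$; then eventually $p_n(x)\in U_k$. For the reverse inclusion, if $p_n(x)\in U_k$ for all $n\ge N$, then $p(x)\in\overline{U_k}\subseteq U$. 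Each set on the right is Borel, so $p$ is Borel.
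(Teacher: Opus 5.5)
Your proof is correct, but it takes a different route from the paper. The paper does not iterate. It forms the Borel set $E=\{(x,z)\in D\times X : z=F(x,z)\}$ and notes, by the contraction mapping theorem, that every section $E_x$ is a singleton and hence compact. It then applies the Arsenin--Kunugui uniformization theorem (Theorem~\ref{thm:compact-section}) to get a Borel $p$ with $p(x)\in E_x$ for all $x$. Uniqueness is argued exactly as you do.

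Your Picard-iteration argument replaces this descriptive-set-theoretic machinery with two elementary facts: $x\mapsto F(x,p_n(x))$ is Borel when $p_n$ is, and pointwise limits of Borel maps into a metric space are Borel. It also gives slightly more. It works for an arbitrary measurable space $D$, provided $F$ is measurable for the product $\sigma$-algebra. It also exhibits $p$ explicitly as the pointwise limit of the iterates.

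The paper's argument, in turn, uses the contraction hypothesis only to know that each $F_x$ has exactly one fixed point. So it applies verbatim to any Borel family of self-maps with unique fixed points, where iteration need not converge. Without uniqueness, it still gives a Borel choice of fixed points whenever the fixed-point sets are nonempty and $\sigma$-compact.
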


\begin{proof}
Consider the Borel set
\[
E=\set{(x,z)\in D\times X}{z=F(x,z)}.
\]
By the contraction mapping theorem, each section $E_x$
is a singleton.
Theorem~\ref{thm:compact-section} therefore gives a
Borel map $\func{p}{D}{X}$ such that
$p(x)\in E_x$ for every $x\in D$.
Thus $p(x)=F_x(p(x))$, and uniqueness of the fixed
point in each section gives uniqueness of $p$.
\end{proof}

\subsection{Proof of the main theorem}
By Subsection~\ref{sec:weakly-P+--weakly-Ramsey},
it suffices to choose Borelly an $\CI$-positive
$\CI$-pseudointersection for each $\CI$-strictly
decreasing sequence of $\CI$-positive sets.
The construction below yields an $\CI$-positive
pseudointersection for every such sequence.

\begin{theorem}\label{thm:Sig03-uniform-p+}
Every $\boldsymbol{\Sigma}^0_3$ ideal is uniformly weakly $P^+$.
\end{theorem}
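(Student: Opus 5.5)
The plan is to make the contradiction argument above effective by means of a fixed point. I restrict to $\CI$-strictly decreasing sequences $\vec{A}$, as Subsection~\ref{sec:weakly-P+--weakly-Ramsey} allows. Let $D$ be the set of such sequences; it is Borel because $\CI$ is Borel.

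\emph{Step 1: Borel blocks and the map $F_{\vec A}$.} For $\vec{A}\in D$ I form the partition $B_k=B_k(\vec A)$ as in \eqref{eq:Sigma3-partition}. Each map $\vec A\mapsto B_k(\vec A)$ is Borel, in fact continuous. Lemma~\ref{lem:Borel-JNT} then gives blocks $e_{n,m}(\vec A)=e_m(B_n(\vec A))$ that depend Borelly on $\vec A$. Put
\[
F_{\vec A}(X)=\bigcup_{(n,m)\in X}e_{n,m}(\vec A).
\]
Exactly as in the preliminary argument, two facts hold for every $X$:
\begin{itemize}
\item if $X\in\varnothing\times\fin$, then $F_{\vec A}(X)$ is a pseudointersection of $\vec A$, by \eqref{eq:part-prop};
\item if $X\notin\varnothing\times\fin$, then $F_{\vec A}(X)\in\CI^+$.
\end{itemize}

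\emph{Step 2: the fixed point does the work.} The set $\CI^+$ is $\boldsymbol{\Pi}^0_3$ and $\varnothing\times\fin$ is $\boldsymbol{\Pi}^0_3$-complete. So I fix once and for all a continuous $h\colon\cantor\to2^{\omega\times\omega}$ with
\[
Y\in\CI^+\iff h(Y)\in\varnothing\times\fin.
\]
Suppose $X$ satisfies $X=h(F_{\vec A}(X))$. If $X\notin\varnothing\times\fin$, then $F_{\vec A}(X)\in\CI^+$, hence $X=h(F_{\vec A}(X))\in\varnothing\times\fin$, which is a contradiction. Therefore $X\in\varnothing\times\fin$. Then $F_{\vec A}(X)\in\CI^+$ by the reduction, and $F_{\vec A}(X)$ is a pseudointersection by Step 1. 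So $\Pi(\vec A)=F_{\vec A}(p(\vec A))$ is the desired witness, where $p(\vec A)$ is the fixed point. Lemma~\ref{lemma:Borel-fixed-points} makes $p$ Borel as soon as $(\vec A,X)\mapsto T_{\vec A}(X)$ is Borel and each $T_{\vec A}$ is a contraction. Borelness is clear, since $h$ is continuous and $F$ is Borel in $\vec A$ and continuous in $X$.

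\emph{Step 3 (main obstacle): arranging a contraction.} The map $X\mapsto h(F_{\vec A}(X))$ is merely continuous, so I will insert a delaying recoding. Fix an enumeration of $\omega\times\omega$ and the metric $d(X,X')=2^{-\min\{j:X(j)\ne X'(j)\}}$.

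First, $h$ is uniformly continuous on the compact space $\cantor$. So there is a fixed increasing modulus $\mu$ such that $h(Y)\restriction k$ depends only on $Y\restriction\mu(k)$.

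Second, I compose with a continuous $\rho\colon2^{\omega\times\omega}\to2^{\omega\times\omega}$ that preserves membership in $\varnothing\times\fin$ in both directions. I will try $\rho(Y)(n,\ell)=Y(n,m)$ for $\ell$ in a block of positions $J_{n,m}$, with each $J_{n,m}$ placed far out in the enumeration. Finitely many ones in a column stay finitely many, infinitely many stay infinitely many, and the fixed-point argument of Step 2 is unchanged for $T_{\vec A}=\rho\circ h\circ F_{\vec A}$.

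Third, $F_{\vec A}(X)\restriction\mu(k)$ depends only on those coordinates $(n,m)$ whose block meets $\mu(k)$. The blocks are disjoint and nonempty, so there are at most $\mu(k)$ of them. However, their indices in the enumeration are not bounded in advance.

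To repair this, I would either reindex the blocks Borelly in $\vec A$, listing coordinates in order of $\min e_{n,m}$, or let $\rho$ depend Borelly on $\vec A$. The goal is that the first $j$ coordinates of $T_{\vec A}(X)$ depend only on the first $j-1$ coordinates of $X$, which makes $T_{\vec A}$ $\tfrac12$-Lipschitz.

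Verifying that this reindexing respects the column structure of $\varnothing\times\fin$ is where I expect the real work to lie.
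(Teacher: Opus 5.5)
\paragraph{Gap: the contraction in Step 3 is never established.} Your Steps 1 and 2 are sound and follow the paper's logic: the fixed-point identity forces $X\in\varnothing\times\fin$, and that yields an $\CI$-positive pseudointersection. (A small slip: $\vec A\mapsto B_0(\vec A)$ is Borel but not continuous, because of $\bigcap_k A_k$. Borel is all you need.) The genuine gap is Step 3, which is the heart of the proof and which you leave open.

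The device you propose, a recoding $\rho$ applied \emph{after} $h$, does not address the obstacle you yourself identify. Post-composing with $\rho$ only moves output coordinates. It does not change which coordinates of $X$ feed into $F_{\vec A}(X)\restriction\mu(k)$. For suitable $\vec A$, namely a small point lying in $B_n$ with $n$ large, these inputs include some $X(n,m)$ with $n$ arbitrarily large. So no fixed placement of the outputs guarantees that they come after all their inputs.

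The alternatives you mention (reindexing by $\min e_{n,m}(\vec A)$, or an $\vec A$-dependent $\rho$) are not carried out. Reordering alone supplies no delay: the block containing $0$ comes first in the $\min$-order, yet its bit is decided by $h$ reading $F_{\vec A}(X)\restriction\mu(\cdot)$, which may involve that very coordinate.

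\paragraph{The missing idea: thin the blocks instead of reordering them.} Put the delay on the input side. Since each $(e_{n,m})_m$ consists of successive nonempty sets, $\min e_{n,m}\ge m$ holds for every column $n$ simultaneously. Write $\mathrm{idx}(n,m)$ for the position of $(n,m)$ in your fixed enumeration, and take $\mu$ strictly increasing. Replace $F_{\vec A}$ by
\[ F'_{\vec A}(X)=\bigcup_{(n,m)\in X}e_{n,\phi(n,m)},\qquad \phi(n,m)=\mu(\mathrm{idx}(n,m)+1). \]
Suppose a block $e_{n',\phi(n',m')}$ meets $\mu(i+1)$. Then $\mu(\mathrm{idx}(n',m')+1)\le\min e_{n',\phi(n',m')}<\mu(i+1)$, so $\mathrm{idx}(n',m')<i$.

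Hence coordinate $i$ of $h(F'_{\vec A}(X))$ depends only on coordinates of $X$ before $i$. So $h\circ F'_{\vec A}$ is $\tfrac12$-Lipschitz for the fixed metric, and you need neither $\rho$ nor any $\vec A$-dependent reindexing. Both facts of Step 1 survive, because $m\mapsto\phi(n,m)$ is injective and never leaves column $n$. The column structure is therefore untouched, and Step 2 goes through verbatim.

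This is precisely the paper's argument, written there with the fixed point on the $\cantor$ side. There $g(z)(j)$ depends only on $z\restriction r_j$, and the paper uses only the blocks $\epsilon_j=e_{(j)_0,r_j}$. Since $\min\epsilon_j\ge r_j$, the map $z\mapsto\bigcup_{j\in g(z)}\epsilon_j$ is $\tfrac12$-Lipschitz.
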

\begin{proof}
Let $\CI$ be a $\boldsymbol{\Sigma}^0_3$ ideal, and let
\[
\SD=\set{\vec{A}\in(\cantor)^\omega}
{\vec{A}\text{ is an $\CI$-strictly decreasing sequence
of $\CI$-positive sets}}.
\]
The set $\SD$ is Borel.
As discussed in Subsection~\ref{sec:weakly-P+--weakly-Ramsey}, it suffices to construct the required Borel map on $\SD$.

For $\vec{A}=(A_n)_n\in\SD$, let $(B_k)_k$ be the
partition defined in \eqref{eq:Sigma3-partition} and $\Lambda$ be as in Lemma~\ref{lem:Borel-JNT}.
Write $\Lambda(B_k)=(e_{k,m})_{m\in\omega}$, so that
$(e_{k,m})_m$ partitions $B_k$ into successive nonempty
finite sets and
\begin{equation}\label{eq:inf-block-positive}
\bigcup_{m\in C}e_{k,m}\in\CI^+
\qquad(C\in\INF).
\end{equation}
The sets $B_k$ and $e_{k,m}$ depend Borelly on $\vec{A}$;
we suppress this dependence in the notation.

Using the pairing function fixed in Section~\ref{sec:Preliminaries}, define the ideal
\begin{equation}\label{eq:J-definition}
\CJ=\set{x\subseteq\omega}
{(\forall k\in\omega)\ \abs{\set{m\in\omega}{\code{k}{m}\in x}}<\infty}.
\end{equation}
Thus $\CJ$ is the copy of $\varnothing\times\fin$ on $\omega$ induced by the pairing function, and it is $\boldsymbol{\Pi}^0_3$-complete.
Since $\CI^+\in\boldsymbol{\Pi}^0_3$, fix a continuous map $\func{g}{\cantor}{\cantor}$ such that 
\begin{equation}\label{eq:Sigma3-reduction}
Z\in\CI^+
\quad\iff\quad
g(Z)\in \CJ.
\end{equation}
By continuity and compactness, choose strictly increasing
integers $r_j\in\omega$ such that $g(z)(j)$ depends only
on $z\restriction r_j$.
Set
\[
\epsilon_j(\vec{A})=e_{(j)_0,r_j}.
\]
These finite sets are pairwise disjoint.
Since each $(e_{k,m})_m$ consists of successive nonempty
blocks, $\min e_{k,m}\geq m$, and hence $\min\epsilon_j(\vec{A})\geq r_j$.

Define
\[
\func{F}{\SD\times\cantor}{\cantor},
\qquad
F(\vec{A},z)=\bigcup_{j\in g(z)}\epsilon_j(\vec{A}).
\]
We verify the hypotheses of
Lemma~\ref{lemma:Borel-fixed-points}.
The map $F$ is Borel, since
\[
k\in F(\vec{A},z)
\quad\iff\quad
(\exists j)\,
\bigl(k\in\epsilon_j(\vec{A})\ \land\ g(z)(j)=1\bigr).
\]

We next show that each section $F_{\vec{A}}$ is
$\tfrac{1}{2}$-Lipschitz for the usual complete metric
\[
d(z,z')=2^{-\min\{n:z(n)\ne z'(n)\}}
\quad(z\ne z'),
\qquad d(z,z)=0.
\]
Fix $\vec{A}\in\SD$ and distinct $z,z'\in\cantor$, and let
$n=\min\{k:z(k)\ne z'(k)\}$.
If $k\leq n$ and $k\in\epsilon_j(\vec{A})$, then
$r_j\leq k\leq n$, so $g(z)(j)=g(z')(j)$.
Outside the selected blocks, both outputs are zero.
Thus
\[
F_{\vec{A}}(z)\restriction(n+1)
=
F_{\vec{A}}(z')\restriction(n+1),
\]
and consequently
\[
d\bigl(F_{\vec{A}}(z),F_{\vec{A}}(z')\bigr)
\leq 2^{-(n+1)}
=\tfrac12d(z,z').
\]

By Lemma~\ref{lemma:Borel-fixed-points}, there is
a Borel map $\func{\Pi}{\SD}{\cantor}$ such that
\begin{equation}\label{eq:Sigma3-fixed-point}
\Pi(\vec{A})
=
F_{\vec{A}}(\Pi(\vec{A}))
=
\bigcup_{j\in g(\Pi(\vec{A}))}\epsilon_j(\vec{A}).
\end{equation}

Fix $\vec{A}\in\SD$, and write
\[
x=g(\Pi(\vec{A})),
\qquad
X_k=\set{m\in\omega}{\code{k}{m}\in x}.
\]
We verify that
\begin{enumerate}[label=(G\arabic*),ref=G\arabic*]
    \item\label{item:G1}
    $\Pi(\vec{A})\in\CI^+$;
    \item\label{item:G2}
    $\abs{\Pi(\vec{A})\cap B_k}<\infty$ for every $k$.
\end{enumerate}

For~\eqref{item:G1}, suppose that $\Pi(\vec{A})\in\CI$.
By \eqref{eq:Sigma3-reduction}, $x\notin \CJ$, so some
$X_{k_0}$ is infinite.
Since the integers $r_j$ are distinct,
\eqref{eq:Sigma3-fixed-point} and
\eqref{eq:inf-block-positive} give
\[
\Pi(\vec{A})
\supseteq
\bigcup_{m\in X_{k_0}}e_{k_0,r_{\code{k_0}{m}}}
\in\CI^+,
\]
contradicting $\Pi(\vec{A})\in\CI$.

For~\eqref{item:G2}, \eqref{item:G1} and
\eqref{eq:Sigma3-reduction} imply that $x\in \CJ$.
Thus every $X_k$ is finite.
Since the sets $B_k$ are pairwise disjoint,
\eqref{eq:Sigma3-fixed-point} gives
\[
\Pi(\vec{A})\cap B_k
=
\bigcup_{m\in X_k}e_{k,r_{\code{k}{m}}},
\]
which is finite.
By \eqref{eq:part-prop}, $\Pi(\vec{A})$ is therefore
an $\CI$-positive pseudointersection of $\vec{A}$,
as required.
\end{proof}

A proof of the stronger statement that every
$\boldsymbol{\Sigma}^0_3$ ideal is uniformly $P^+$
will be presented in our separate paper devoted to uniform
combinatorial properties of ideals.
The present paper establishes the uniformly weakly $P^+$
property, which suffices for our applications to closed
$\CI$-MED families.

\section{\texorpdfstring{$\boldsymbol{\Pi}^0_3$}{Pi03} ideals and weakly Farah  ideals}\label{sec:uwR}

The preceding section establishes the uniform weakly $P^+$
property for $\boldsymbol{\Sigma}^0_3$ ideals.
We now turn to $\boldsymbol{\Pi}^0_3$ ideals.
It is not known whether every such ideal can be written
as a countable intersection of hereditary $F_\sigma$ sets.
To our knowledge, all known examples admit such a
representation and hence are weakly Farah.
We show that this representation allows us to construct
Borel Ramsey witnesses.
Our main result is as follows.

\begin{theorem}\label{thm:weakly-farah-ideals-uwr}
Every weakly Farah ideal is uniformly weakly Ramsey.
\end{theorem}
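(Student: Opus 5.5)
The plan is to turn the hereditary $F_\sigma$ representation into finitary, hence Borel, tests, and to build the witness $H(A,c)$ by a recursion in which every choice is the least one satisfying an arithmetical condition in $(A,c)$.

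\emph{Step 1 (representation).} Write $\CI=\bigcap_n H_n$ with each $H_n$ hereditary and $F_\sigma$. Each $H_n$ can be written as $\bigcup_k K_{n,k}$ with every $K_{n,k}$ compact and hereditary. Indeed, the downward closure of a compact subset of $\cantor$ is compact, and it stays inside $H_n$ because $H_n$ is hereditary. We may also assume $K_{n,k}\subseteq K_{n,k+1}$. The key fact is this: for compact hereditary $K$, a set $X\notin K$ already has a finite subset $F\notin K$. So all statements of the form ``some $0$-homogeneous $Y\subseteq A$ avoids $K_{n,k}$'' reduce to existence of finite sets, and are therefore Borel in $(A,c)$. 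Moreover, $A\in\CI^+$ iff $A\notin H_n$ for some $n$, i.e.\ iff $A\notin K_{n,k}$ for all $k$.

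\emph{Step 2 (the homogeneous alternative).} Fix $A\in\CI^+$ and a subadditive $c$. For a finite $0$-homogeneous $G\subseteq A$, call $G$ \emph{$n$-extendible} if for every $k$ there is a finite $F\subseteq A\setminus(\max G+1)$ with $F\notin K_{n,k}$ and $G\cup F$ $0$-homogeneous. This is a Borel condition. We then search, in a fixed enumeration, for the least $n$ such that $\varnothing$ is $n$-extendible in a hereditary sense: every extendible $G$ admits, for each $k$, an extension $G\cup F$ with $F\notin K_{n,k}$ that is again $n$-extendible. If such an $n$ exists, we run the recursion $G_0\subseteq G_1\subseteq\cdots$, at stage $k$ taking the least extension with $G_{k+1}\setminus G_k\notin K_{n,k}$. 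The union $Y=\bigcup_k G_k$ is then $0$-homogeneous, and by heredity and monotonicity of the $K_{n,k}$ it lies outside every $K_{n,k}$, so $Y\notin H_n$ and $Y\in\CI^+$. Subadditivity, in the form \eqref{eq:common-right-endpoint}, is what makes it plausible that extendibility propagates: points $x<y$ that are both $0$-connected to a later point are $0$-connected to each other. So the sets $L_z=\set{x<z}{c(x,z)=0}\cup\{z\}$ are $0$-homogeneous, and these are the natural places to look for the extensions $F$.

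\emph{Step 3 (the nowhere homogeneous alternative).} If no $n$ works, we must output an $\CI$-positive subset of $A$ all of whose $0$-homogeneous subsets lie in $\CI$. The failure of extendibility gives, for each $n$, a finite $0$-homogeneous $G$ and a $k$ such that every $0$-homogeneous extension of $G$ inside $A$ meets $K_{n,k}$ in the tail. I would use these finitely-described obstructions, chosen as least witnesses so that the construction is Borel, to thin $A$. Using $L_z$ again, I would remove the points $0$-connected to the bad configurations, so that the remaining set $A'$ has all its $0$-homogeneous subsets in every $H_n$. Positivity of $A'$ has to be maintained simultaneously: I would interleave the steps so that $A'\notin K_{n_0,k}$ for all $k$, where $n_0$ is the least $n$ with $A\notin H_{n}$.

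\emph{Main obstacle.} The hard part is Steps 2 and 3 together: making the dichotomy Borel. The natural dichotomy, whether $A$ contains an $\CI$-positive $0$-homogeneous set, is only $\boldsymbol{\Sigma}^1_1$. The hope is that the hereditary compact pieces $K_{n,k}$, together with the $L_z$-structure coming from subadditivity, make the correct branch detectable by arithmetical extendibility conditions, as in the $F_\sigma$ case of \cite{HLSZ}, now done uniformly in $n$. If the thinning in Step 3 fails, I would first try to replace $A$ by a Borel-chosen positive subset on which the extendibility rank is constant before applying the dichotomy.
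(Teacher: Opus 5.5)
Your proposal has a genuine gap. The dichotomy in Steps 2--3 is not correct as set up, and Step 3 is a hope rather than an argument.

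\textbf{The representation.} Your $H_n$ are not closed under finite changes, so the positivity of $A$ need not be witnessed by a single $n$ on all tails $A\setminus m$. Your $n$-extendibility, however, only looks at tails above $\max G$. Concretely, $\fin=\bigcap_n H_n$ with $H_n=\fin\cup\pow{[n,\infty)}$, and each $H_n$ is hereditary and $F_\sigma$. Take $A=\omega$ and $c\equiv 0$. Then $G$ is $n$-extendible iff $\omega\setminus(\max G+1)\notin H_n$, that is, iff $\max G\le n-2$. The finitely many subsets of $\{0,\dots,n-2\}$ all lie in some $K_{n,k}$. For that $k$, no extendible $G$ has an extendible extension $G\cup F$ with $F\notin K_{n,k}$. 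So no $n$ passes your test, and Step 3 would have to produce an infinite subset of $\omega$ that is nowhere $0$-homogeneous for the constant coloring $0$; no such set exists.

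This is exactly why the paper first proves that weakly Farah ideals are Farah, via the sets $T_{n,F}$. That gives compact hereditary $C_n$ with $A\in\CI\iff(\forall n)(\exists m)\,A\setminus m\in C_n$. With this, the test $R_n$ (``every tail of $A$ contains a finite $0$-homogeneous $Q\notin C_n$'') has the right property: if it fails for all $n$, then $A$ itself is nowhere $0$-homogeneous.

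\textbf{The missing idea.} You do not need to decide whether $A$ has a positive $0$-homogeneous subset. The $\boldsymbol{\Sigma}^1_1$ obstacle you identify can be bypassed rather than overcome. When $R_{n_*}$ holds, the paper proceeds as follows. It picks successive finite $0$-homogeneous blocks $Q_j\notin C_{n_*}$. It then stabilizes by pigeonhole the patterns $D(j,k)=\set{x\in Q_j}{c(x,\min Q_k)=0}$ along a subsequence $(n_\ell)$. Finally it splits $\bigcup_\ell Q_{n_\ell}\in\CI^+$ into two parts. The first is $X=\bigcup_\ell X_\ell$, which is $0$-homogeneous by \eqref{eq:common-right-endpoint}. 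The second is $Y=\bigcup_\ell(Q_{n_\ell}\setminus X_\ell)$; there subadditivity forces every cross-block pair to have color $1$, so $Y$ is nowhere $0$-homogeneous. One of $X,Y$ is positive, and since $\CI$ is Borel, the rule ``output $X$ if $X\notin\CI$, else $Y$'' is Borel.

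Your hereditary extendibility condition and the thinning in Step 3 have no counterpart to this splitting. Nothing in the proposal shows that failure of your condition for every $n$ yields a positive nowhere $0$-homogeneous subset of $A$, let alone a Borel choice of one.
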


To prove this theorem, we first show that the hereditary
$F_\sigma$ sets in the definition of a weakly Farah ideal
can be chosen to be closed under finite changes.
Equivalently, every weakly Farah ideal is Farah.
This answers the second part of
\cite[Section~5, Question~(3)]{HM2011}.

\begin{prop}\label{prop:weakly-farah-is-farah}
An ideal is weakly Farah if and only if it is Farah.
\end{prop}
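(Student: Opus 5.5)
The plan is to use Theorem~\ref{thm:Farah-ideal}. Every Farah ideal is already weakly Farah by that theorem, so only the converse needs proof. Suppose $\CI=\bigcap_n H_n$ with each $H_n$ hereditary and $F_\sigma$. I want to replace $(H_n)_n$ by a countable family of hereditary $F_\sigma$ sets that are also closed under finite changes and still have intersection exactly $\CI$; Theorem~\ref{thm:Farah-ideal} then finishes.

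\emph{Step 1: normalise the $H_n$.} The downward closure of a compact subset of $\cantor$ is again compact: it is the image of the compact set $\set{(x,y)}{y\subseteq x,\ x\in K}$ under a projection. Using this, I write each $H_n=\bigcup_k K_{n,k}$ with the $K_{n,k}$ hereditary, compact and increasing in $k$. Next I use that $\CI$ is closed under finite unions. For every finite $s\subseteq\omega$ the set
\[
H_{n,s}=\set{A}{A\cup s\in H_n}
\]
is hereditary and $F_\sigma$, being a continuous preimage of $H_n$. It contains $\CI$, and $H_{n,\varnothing}=H_n$, so $\bigcap_{n,s}H_{n,s}=\CI$. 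The same holds for the finite intersections $H_{n,k}^\ast=\bigcap_{s\subseteq k}H_{n,s}$, which are moreover decreasing in $k$.

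\emph{Step 2: close under finite changes.} The natural candidates are
\[
F_{n,k}=\set{A}{(\exists m)\ A\setminus m\in H^\ast_{n,k}}.
\]
Each $F_{n,k}$ is a countable union of continuous preimages of an $F_\sigma$ set, hence $F_\sigma$. It is hereditary and contains $\CI$. It is also closed under finite changes: if $A\setminus m\in H^\ast_{n,k}$ and $A'\symdiff A$ is finite, then $A'\setminus m'\subseteq A\setminus m$ for large $m'$.

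\emph{Step 3 (main obstacle): show $\bigcap_{n,k}F_{n,k}\subseteq\CI$.} If $A\in F_{n,k}$ with witness $m\le k$, then $A=(A\setminus m)\cup(A\cap m)$ with $A\cap m\subseteq k$, so $A\in H_n$. The difficulty is that the witness $m$ for $F_{n,k}$ may be larger than $k$, and this is exactly where the naive argument is circular.

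My first attempt is to tie the witness to the index through compactness. Replace $H^\ast_{n,k}$ by sets of the form
\[
\set{A}{(\forall s\subseteq m)\ (A\setminus m)\cup s\in K_{n,j}},
\]
and diagonalise over $(n,j)$ using the pairing function. The aim is that a single compact $K_{n,j}$, which is closed under the relevant finite modifications below $m$, absorbs all of $A\cap m$. Because each $K_{n,j}$ is compact, the condition ``for all $s\subseteq m$'' stays closed for each fixed $m$, so the resulting sets remain $F_\sigma$. The membership of $\CI$ must then be checked via the ideal property: for $A\in\CI$ and fixed $m$, the finitely many sets $(A\setminus m)\cup s$ lie in $\CI\subseteq H_n$, so they lie in some common $K_{n,j}$.

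If this bookkeeping cannot be made to work, I would fall back on a Baire-category argument on the compact set $\set{A'}{A'\subseteq A}$ to find, for $A\notin\CI$, a single pair $(n,k)$ for which no $m$ witnesses $A\in F_{n,k}$.
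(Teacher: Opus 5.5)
\textbf{There is a genuine gap: the key inclusion in Step~3 is never proved.} Your Steps~1--2 are correct, and your family is in fact the right one. Since $(A\setminus m)\cup s\in H_n$ persists as $m$ grows, $F_{n,k}=\bigcap_{s\subseteq k}T_{n,s}$ with $T_{n,s}=\set{A}{(\exists m)\ s\cup(A\setminus m)\in H_n}$. These $T_{n,s}$ are exactly the paper's sets, so $\bigcap_{n,k}F_{n,k}$ equals the paper's intersection; the compact normalisation is never needed. The whole content of the proposition is $\bigcap_{n,k}F_{n,k}\subseteq\CI$, and you do not prove it.

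Your first attempt does not address the obstacle. Compactness only merges finitely many conditions for a fixed $m$, whereas the problem is that the witness $m$ is uncontrolled. Once $m$ is quantified existentially, as closure under finite changes forces, the proposed sets give nothing new: $\bigcup_m\set{A}{(\forall s\subseteq m)\ (A\setminus m)\cup s\in K_{n,j}}=K_{n,j}$. Take $m=0$ for one inclusion and $s=A\cap m$ for the other; unioning over $j$ as well just returns $H_n$, and neither is in general closed under finite changes. The missing idea is not to certify $A\in H_n$ directly. Instead, split $A$ into two pieces, each lying in every $H_n$, and then use that $\CI$ is closed under finite unions.

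The paper does this as follows. Given $A\in\bigcap_{n,s}T_{n,s}$, choose $0=m_0<m_1<\cdots$ alternately.
\begin{itemize}
\item With $B_n=A\cap\bigcup_{j<n}[m_{2j+1},m_{2j+2})$, use $A\in T_{n,B_n}$ to pick $m_{2n+1}>m_{2n}$ with $B_n\cup(A\setminus m_{2n+1})\in H_n$.
\item With $D_n=A\cap\bigcup_{j\le n}[m_{2j},m_{2j+1})$, use $A\in T_{n,D_n}$ to pick $m_{2n+2}>m_{2n+1}$ with $D_n\cup(A\setminus m_{2n+2})\in H_n$.
\end{itemize}
Set $B=A\cap\bigcup_n[m_{2n+1},m_{2n+2})$ and $D=A\cap\bigcup_n[m_{2n},m_{2n+1})$. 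Then $B\subseteq B_n\cup(A\setminus m_{2n+1})$ and $D\subseteq D_n\cup(A\setminus m_{2n+2})$, so $B,D\in\bigcap_nH_n=\CI$ and $A=B\cup D\in\CI$. This breaks your circularity. The part of $B$ below the witness $m_{2n+1}$ is $B_n$, which was fixed before that witness was chosen, because the interval $[m_{2n},m_{2n+1})$ is assigned to $D$.

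Alternatively, your Baire-category fallback can be completed. Let $A\in\CI^+$.
\begin{itemize}
\item $\CI\cap\pow{A}$ is meager in $\pow{A}$, by Theorem~\ref{thm:JNT} applied to $\CI\restriction A$.
\item Since $\pow{A}\setminus\CI=\bigcup_n(\pow{A}\setminus H_n)$, some $G_\delta$ set $\pow{A}\setminus H_n$ is nonmeager. It is therefore comeager in a basic clopen set $U=\set{A'\subseteq A}{A'\cap m=s}$.
\item Suppose $A\in T_{n,s}$ with a witness $k\geq m$. By heredity, the nonempty open set $\set{A'\subseteq A}{A'\cap k=s}\subseteq U$ lies inside $H_n$, which contradicts comeagerness in $U$.
\item Hence $A\notin F_{n,k}$ for every $k\geq m$.
\end{itemize}
Either argument would close the gap, but neither is actually carried out in your proposal.
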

\begin{proof}
Every Farah ideal is weakly Farah by Theorem~\ref{thm:Farah-ideal}, so only the forward implication requires proof.
Write $\CI = \bigcap_n H_n$, where each $H_n$ is hereditary and $F_\sigma$.
By Theorem~\ref{thm:Farah-ideal} it suffices to represent $\CI$ as a countable intersection of hereditary $F_\sigma$ sets closed under finite changes.

For $n\in\omega$ and $F\in[\omega]^{<\omega}$, set
\[
T_{n,F} =\set{A\subseteq\omega}{(\exists k \in \omega)\,F\cup(A\setminus k)\in H_n}.
\]
Each $T_{n,F}$ is hereditary since $H_n$ is hereditary.
It is $F_\sigma$ since the maps $A\mapsto F\cup(A\setminus k)$ are continuous.
To check closure under finite changes, let $A\in T_{n,F}$ and suppose that $A\symdiff A'$ is finite. 
Choose $k$ such that $F\cup(A\setminus k)\in H_n$, and then choose $k'\geq k$ such that
$A\symdiff A'\subseteq k'$. Thus
\[
F\cup(A'\setminus k') =F\cup(A\setminus k') \subseteq F\cup(A\setminus k).
\]
Since $H_n$ is hereditary, $F\cup(A'\setminus k') \in H_n$, so $A'\in T_{n,F}$.

We claim that
\[
\CI=\bigcap_{n\in\omega,\,F\in[\omega]^{<\omega}}T_{n,F}.
\]
If $A \in \CI$, then $F \cup A \in \CI \subseteq H_n$ for every finite $F$; taking $k = 0$ gives $A \in T_{n,F}$.

For the converse, let $A$ belong to the right-hand side.
Starting with $m_0=0$, recursively define $m_0<m_1<\cdots$ as follows.
Given $m_0,\ldots,m_{2n}$, set
\[
B_n=A\cap\bigcup_{j<n}[m_{2j+1},m_{2j+2}),
\]
and use $A \in T_{n,B_n}$ to choose $m_{2n+1} > m_{2n}$ with $B_n \cup (A \setminus m_{2n+1}) \in H_n$.
Next set
\[
D_n=A\cap\bigcup_{j\leq n}[m_{2j},m_{2j+1}),
\]
and use $A \in T_{n,D_n}$ to choose $m_{2n+2} > m_{2n+1}$ with $D_n \cup (A \setminus m_{2n+2}) \in H_n$.

Finally set
\[
B=A\cap\bigcup_n[m_{2n+1},m_{2n+2}), \qquad D=A\cap\bigcup_n[m_{2n},m_{2n+1}).
\]
For every $n$ we have $B\subseteq B_n\cup(A\setminus m_{2n+1})$ and $D\subseteq D_n\cup(A\setminus m_{2n+2})$, so $B, D \in H_n$ because $H_n$ is hereditary.
Hence $B, D \in \bigcap_n H_n = \CI$, and $A = B \cup D \in \CI$.
This proves the claim.

Since each $T_{n,F}$ is a hereditary $F_\sigma$ set
closed under finite changes, the claim shows that
$\CI$ is Farah.
\end{proof}

We now use the representation~\eqref{eq:farah-representation}
to prove Theorem~\ref{thm:weakly-farah-ideals-uwr}.

\begin{proof}[Proof of Theorem~\ref{thm:weakly-farah-ideals-uwr}]
Fix a weakly Farah ideal $\CI$. 
By Proposition~\ref{prop:weakly-farah-is-farah}, choose compact hereditary sets $(C_n)_n$ satisfying \eqref{eq:farah-representation}.
Let
\[
\CW=
\set{(A,c)\in\INF\times2^{[\omega]^2}}{A\in\CI^+\text{ and }c\text{ is subadditive}}.
\]
The set $\CW$ is Borel. 
We define $H$ on $\CW$, and set $H(A,c) = A$ for $(A,c) \notin \CW$.

For each $n\in\omega$, let $R_n(A,c)$ be the Borel condition
\[
(\forall m \in \omega)(\exists Q \in [A \setminus m]^{<\omega})\
\bigl(c \restriction [Q]^2 \equiv 0 \ \land\ Q \notin C_n\bigr).
\]
Fix $(A,c) \in \CW$ and consider two cases.

\medskip
\noindent\textbf{Case 1:} $R_n(A,c)$ fails for every $n$.

We shall prove that $A$ is nowhere $0$-homogeneous for $c$   and we therefore set $H(A,c)=A$.
To see this, let $B\subseteq A$ be $0$-homogeneous. For each $n$, there is $m_n$ such that every finite $0$-homogeneous subset of $A\setminus m_n$ belongs to $C_n$.
In particular, every finite subset of $B\setminus m_n$ belongs to $C_n$.
Since $C_n$ is closed, $B\setminus m_n\in C_n$.
This holds for every $n$, so $B\in\CI$ by \eqref{eq:farah-representation}. Thus $A$ is nowhere $0$-homogeneous.

\medskip
\noindent\textbf{Case 2:} $R_n(A,c)$ holds for some $n$.

Let $n_*$ be the least such $n$, and fix a well-ordering $\lhd$ of $[\omega]^{<\omega}$ of order type $\omega$.
Recursively let $Q_j$ be the $\lhd$-least finite set $Q$ such that
\begin{itemize}
    \item $Q \subseteq A$, and $\min Q > \max Q_{j-1}$ if $j \geq 1$;
    \item $c \restriction [Q]^2 \equiv 0$;
    \item $Q \notin C_{n_*}$.
\end{itemize}
Such a set exists by $R_{n_*}(A,c)$.
Since $\varnothing \in \CI$, we have $\varnothing\in C_{n_*}$; this ensures that every $Q_j$ is nonempty.
So the sets $Q_j$ are nonempty, finite, $0$-homogeneous, and $\max Q_j < \min Q_{j+1}$.

For each $j$, let $v(j)=\min Q_j$, and for $j<k$ set
\[
D(j,k)=\set{x\in Q_j}{c(x,v(k))=0}.
\]
We now recursively construct infinite sets $M_\ell \subseteq \omega$, integers $n_\ell$, and sets $X_\ell \subseteq Q_{n_\ell}$.
Set $M_0=\omega$. 
Given $M_\ell$, let $n_\ell=\min M_\ell$ and $X_\ell$ be the $\lhd$-least $X \subseteq Q_{n_\ell}$ such that
\[
\set{k \in M_\ell}{k > n_\ell \ \land\ D(n_\ell,k) = X} \text{ is infinite.}
\]
Such an $X$ exists because $M_\ell$ is infinite and $Q_{n_\ell}$ has only finitely many subsets.
Set
\[
M_{\ell+1} = \set{k \in M_\ell}{k > n_\ell \ \land\ D(n_\ell,k) = X_\ell}.
\]
Then $(n_\ell)_\ell$ is strictly increasing, and since $n_{\ell'} \in M_{\ell'} \subseteq M_{\ell+1}$ for $\ell < \ell'$,
\begin{equation}\label{eq:farah-stabilized-pattern}
D(n_\ell, n_{\ell'}) = X_\ell
\qquad (\ell < \ell').
\end{equation}
The construction is illustrated in
Figure~\ref{fig:finite-pattern-stabilization}.

\begin{figure}[H]
\centering
\resizebox{0.95\linewidth}{!}{%
\begin{tikzpicture}[
    x=1cm,
    y=1cm,
    block/.style={
        draw,
        minimum width=1.7cm,
        minimum height=.7cm,
        inner sep=0pt
    },
    kept/.style={
        block,
        line width=1.1pt
    },
    discarded/.style={
        block,
        dashed
    },
    point/.style={
        circle,
        draw,
        fill=white,
        inner sep=1.25pt
    },
    xpoint/.style={
        circle,
        draw,
        fill=black,
        inner sep=1.25pt
    },
    marker/.style={
        rectangle,
        draw,
        fill=black,
        minimum size=4pt,
        inner sep=0pt
    }
]

% The underlying order on omega.
\draw[->] (0,0) -- (12.7,0) node[right] {$\omega$};

% The distinguished block Q_{n_\ell}.
\node[kept] at (1.25,0) {};

% The second and third points represent X_\ell.
\node[point]  at (.72,0) {};
\node[xpoint] at (1.05,0) {};
\node[xpoint] at (1.38,0) {};
\node[point]  at (1.72,0) {};

% Bracket indicating X_\ell.
\draw (.94,.22) -- (.94,.43) -- (1.49,.43) -- (1.49,.22);
\coordinate (xell) at (1.215,.43);
\node at (1.215,.68) {$X_\ell$};

\node at (1.25,-.58) {$Q_{n_\ell}$};
\node[font=\scriptsize] at (1.25,-.92)
    {$n_\ell=\min M_\ell$};

% A block discarded at the next stabilization step.
\node[discarded] at (3.55,0) {};
\node[marker] at (3.03,0) {};
\node[point]  at (3.38,0) {};
\node[point]  at (3.70,0) {};
\node[point]  at (4.02,0) {};

\node at (3.55,-.58) {$Q_{r_0}$};
\node[font=\scriptsize] at (3.55,-.92)
    {$r_0\in M_\ell\setminus M_{\ell+1}$};

% A retained block.
\node[kept] at (5.90,0) {};
\node[marker] (vkzero) at (5.38,0) {};
\node[point] at (5.73,0) {};
\node[point] at (6.05,0) {};
\node[point] at (6.37,0) {};

\node[font=\scriptsize, above] at (5.38,0) {$v(k_0)$};
\node at (5.90,-.58) {$Q_{k_0}$};
\node[font=\scriptsize] at (5.90,-.92)
    {$k_0\in M_{\ell+1}$};

% Another discarded block.
\node[discarded] at (8.25,0) {};
\node[marker] at (7.73,0) {};
\node[point] at (8.08,0) {};
\node[point] at (8.40,0) {};
\node[point] at (8.72,0) {};

\node at (8.25,-.58) {$Q_{r_1}$};
\node[font=\scriptsize] at (8.25,-.92)
    {$r_1\in M_\ell\setminus M_{\ell+1}$};

% A second retained block.
\node[kept] at (10.60,0) {};
\node[marker] (vkone) at (10.08,0) {};
\node[point] at (10.43,0) {};
\node[point] at (10.75,0) {};
\node[point] at (11.07,0) {};

\node[font=\scriptsize, above] at (10.08,0) {$v(k_1)$};
\node at (10.60,-.58) {$Q_{k_1}$};
\node[font=\scriptsize] at (10.60,-.92)
    {$k_1\in M_{\ell+1}$};

% Stabilized color-0 connections.
\draw (xell) to[bend left=18]
    node[pos=.55, above] {$0$} (vkzero.north);

\draw (xell) to[bend left=15]
    node[pos=.68, above] {$0$} (vkone.north);

\node at (12.05,0) {$\cdots$};

\end{tikzpicture}%
}
\caption{The construction of $X_\ell$ and $M_{\ell+1}$.}
\label{fig:finite-pattern-stabilization}
\end{figure}
Set 
\[
X=\bigcup_\ell X_\ell \qquad Y=\bigcup_\ell(Q_{n_\ell}\setminus X_\ell).
\]
We show that $X$ is $0$-homogeneous for $c$, that $Y$ is nowhere $0$-homogeneous for $c$, and that $X\cup Y\in\CI^+$.

To see that $X$ is $0$-homogeneous, first note that each
$X_\ell$ is $0$-homogeneous, being a subset of $Q_{n_\ell}$.
If $x\in X_\ell$ and $y\in X_{\ell'}$ with $\ell<\ell'$,
choose $r>\ell'$.
By \eqref{eq:farah-stabilized-pattern},
$c(x,v(n_r))=c(y,v(n_r))=0$.
Since $x<y<v(n_r)$, \eqref{eq:common-right-endpoint}
gives $c(x,y)=0$.
Thus $X$ is $0$-homogeneous.

To see that $Y$ is nowhere $0$-homogeneous, we first claim
that points of $Y$ in different blocks have color $1$.
Indeed, let $x\in Y\cap Q_{n_\ell}$ and
$y\in Y\cap Q_{n_{\ell'}}$, where $\ell<\ell'$,
and suppose that $c(x,y)=0$.
Since $x\notin X_\ell=D(n_\ell,n_{\ell'})$,
we have $c(x,v(n_{\ell'}))=1$.
Thus $y\ne v(n_{\ell'})$, and hence
$x<v(n_{\ell'})<y$.
Both $v(n_{\ell'})$ and $y$ belong to the
$0$-homogeneous set $Q_{n_{\ell'}}$, so
$c(v(n_{\ell'}),y)=0$.
Subadditivity now gives
\[
c(x,v(n_{\ell'}))
\leq c(x,y)+c(v(n_{\ell'}),y)=0.
\]
It follows that $x\in D(n_\ell,n_{\ell'})=X_\ell$,
contrary to $x\notin X_\ell$.
This proves the claim.

Consequently, every $0$-homogeneous subset of $Y$
is contained in a single finite block $Q_{n_\ell}$
and hence belongs to $\CI$.
Thus $Y$ is nowhere $0$-homogeneous.

We have $X \cup Y = \bigcup_\ell Q_{n_\ell}$.
For every $m \in \omega$ there is $\ell$ with $\min Q_{n_\ell} \geq m$, and then
\[
(X \cup Y) \setminus m \supseteq Q_{n_\ell} \notin C_{n_*}.
\]
As $C_{n_*}$ is hereditary, $(X \cup Y) \setminus m \notin C_{n_*}$ for every $m$, so $X \cup Y \notin \CI$ by~\eqref{eq:farah-representation}.

Now define
\[
H(A,c)=
\begin{cases}
X,&\text{if } X\notin\CI,\\
Y,&\text{if } X\in\CI.
\end{cases}
\]
In both cases $H(A,c) \subseteq A$.
If $X \notin \CI$, then $H(A,c)$ is $\CI$-positive and $0$-homogeneous.
If $X \in \CI$, then $Y \notin \CI$ because $X \cup Y \notin \CI$, and $Y$ is nowhere $0$-homogeneous for $c$.

The case distinction and all choices in the construction
are Borel, so $H$ is Borel.
\end{proof}

\section{Applications to closed \texorpdfstring{$\CI$}{I}-MED families}\label{sec:applications}

\begin{theorem}\label{thm:closed-MED-applications}
Every ideal $\CI$ in either of the following classes
admits a closed $\CI$-MED family:
\begin{enumerate}[label=\textup{(\roman*)}]
\item\label{item:MED-Sigma3}
$\boldsymbol{\Sigma}^0_3$ ideals;
\item\label{item:MED-Farah}
weakly Farah ideals, including all analytic $P$-ideals.
\end{enumerate}
\end{theorem}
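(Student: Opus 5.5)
The plan is to reduce both items to the criterion from \cite{HLSZ} quoted in the introduction. By \cite[Theorems~4.8 and~6.2]{HLSZ}, every Borel ideal that is uniformly weakly Ramsey admits a closed $\CI$-MED family. So it suffices to check, for each class, that the ideal is Borel and uniformly weakly Ramsey.

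For item~\ref{item:MED-Sigma3}, a $\boldsymbol{\Sigma}^0_3$ ideal is Borel. By Theorem~\ref{thm:Sig03-uniform-p+} it is uniformly weakly $P^+$. By \cite[Lemma~5.2]{HLSZ}, every Borel uniformly weakly $P^+$ ideal is uniformly weakly Ramsey, and the criterion above then yields a closed $\CI$-MED family.

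For item~\ref{item:MED-Farah}, a weakly Farah ideal is a countable intersection of $F_\sigma$ sets. It is therefore $\boldsymbol{\Pi}^0_3$, and in particular Borel. Theorem~\ref{thm:weakly-farah-ideals-uwr} gives that it is uniformly weakly Ramsey, so the criterion applies again.

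It remains to justify the parenthetical claim that analytic $P$-ideals are weakly Farah. By Solecki's theorem \cite{S1999}, an analytic $P$-ideal has the form $\CI=\mathrm{Exh}(\varphi)$ for a lower semicontinuous submeasure $\varphi$. That is, $A\in\CI$ iff $\lim_m\varphi(A\setminus m)=0$. Equivalently, for every $n$ there is $m$ with $\varphi(A\setminus m)\le 2^{-n}$. Set
\[
H_n=\set{A}{(\exists m)\ \varphi(A\setminus m)\le 2^{-n}}.
\]
Monotonicity of $\varphi$ makes $H_n$ hereditary. Each set $\set{A}{\varphi(A\setminus m)\le 2^{-n}}$ is closed by lower semicontinuity, so $H_n$ is $F_\sigma$. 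Hence $\CI=\bigcap_n H_n$ is weakly Farah, as also noted in \cite{HM2011}.

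The main point to verify carefully is that the hypotheses of \cite[Theorems~4.8 and~6.2]{HLSZ} are exactly ``Borel and uniformly weakly Ramsey'' in the form defined in Subsection~\ref{sec:weakly-P+--weakly-Ramsey}. In particular, the domain of the witness map $H$ should match theirs. If their definition instead quantifies over all $A\in\CI^+$ rather than over $\INF$, extending $H$ by $H(A,c)=A$ outside the Borel set $\CW$ handles this, as in the proof of Theorem~\ref{thm:weakly-farah-ideals-uwr}.
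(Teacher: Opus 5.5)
Your proposal is correct and follows the paper's proof. Both items are reduced to \cite[Theorems~4.8 and~6.2]{HLSZ} via Borelness plus the uniformly weakly Ramsey property, using Theorem~\ref{thm:Sig03-uniform-p+} with \cite[Lemma~5.2]{HLSZ} for (i) and Theorem~\ref{thm:weakly-farah-ideals-uwr} for (ii); the only difference is that you verify directly from Solecki's representation $\CI=\mathrm{Exh}(\varphi)$ that analytic $P$-ideals are weakly Farah, where the paper cites \cite[Theorem~3.13]{HM2011} (your sets $H_n$ are in fact even closed under finite changes).
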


\begin{proof}
Both classes consist of Borel ideals and therefore have
the Baire property.
Since $\fin\subseteq\CI$, the conclusion follows from
\cite[Theorems~4.8 and~6.2]{HLSZ} once we verify that
$\CI$ is uniformly weakly Ramsey.

For~\ref{item:MED-Sigma3}, Theorem~\ref{thm:Sig03-uniform-p+} shows that $\CI$ is uniformly weakly $P^+$.
Since $\CI$ is Borel, \cite[Lemma~5.2]{HLSZ} applies and implies that $\CI$ is uniformly weakly Ramsey.

For~\ref{item:MED-Farah}, $\CI$ is uniformly weakly Ramsey by Theorem~\ref{thm:weakly-farah-ideals-uwr}.
The assertion for analytic $P$-ideals follows since every analytic $P$-ideal is Farah \cite[Theorem~3.13 and the following observation]{HM2011}, and hence weakly Farah.
\end{proof}

\section*{Acknowledgment}
Jialiang He and Jiaheng Zhuang are supported by the
National Natural Science Foundation of China
(Grant No.~12671002).

\end{document}